\newtheorem{teo}{Theorem}[section]
\newtheorem{pro}[teo]{Proposition}
\newtheorem{coro}[teo]{Corollary}
\newtheorem{lem}[teo]{Lemma}
\theoremstyle{definition}
\newtheorem{defi}[teo]{Definition}
\newtheorem{exam}[teo]{Example}
\newtheorem{rem}[teo]{Remark}
\newcommand{\V}{\mathbf V}
\newcommand{\N}{\mathbb N}
\newcommand{\C}{\mathbb C}
\newcommand{\m}{\mathfrak{m}}
\newcommand{\bb}{\mathfrak{b}}
\newcommand{\ap}{\mathfrak{a}_p}
\newcommand{\ida}{\mathfrak{a}}
\newcommand{\Jn}{\mathcal{J}_n}
\newcommand{\Ox}{\mathcal O}
\newcommand{\Li}{\mathcal L_n(X,0)}
\newcommand{\Po}{\C[x_1,\ldots,x_s]}
\newcommand{\Na}{Nash_n(X)}
\newcommand{\rk}{\mbox{rank}}
\newcommand{\Sp}{\mbox{Spec }}
\newcommand{\Img}{\mbox{Im}}
\newcommand{\Jc}{\mbox{Jac}}
\newcommand{\Jcn}{\mbox{Jac}_n(F)}
\newcommand{\Jcd}{\mbox{Jac}_2(F)}
\begin{document}

\title{Computational aspects of the higher Nash blowup of hypersurfaces}

\author{Daniel Duarte\footnote{Research supported by CONACyT (M\'{e}xico) through the program
\textit{C\'atedras para J\'ovenes Investigadores}.}}

\maketitle

\begin{abstract}
The higher Nash blowup of an algebraic variety replaces singular points with limits of certain spaces
carrying higher-order data associated to the variety at non-singular points. In this note we will
define a higher-order Jacobian matrix that will allow us to make explicit computations concerning
the higher Nash blowup of hypersurfaces. Firstly, we will generalize a known method to compute the
fiber of this modification. Secondly, we will give an explicit description of the ideal
whose blowup gives the higher Nash blowup. As a consequence, we will deduce a higher-order
version of Nobile's theorem for normal hypersurfaces.
\end{abstract}

\section*{Introduction}

The main purpose of this note is to present, as the title suggests, some computational aspects
of the higher Nash blowup of a hypersurface. The higher Nash blowup is defined as follows
(see \cite{No}, \cite{OZ}, \cite{Y}):
\\
\\
Let $X=\V(I)\subset\C^s$ be an irreducible algebraic variety of dimension
$d$, given as the zero set of some ideal $I$. Let $R$ be its ring of regular functions. For each $p\in X$,
let $(R_p,\m_p)$ be the localization at $p$ and define the $R_p/\m_p\cong\C-$vector space
$T^n_pX:=(\m_p/\m_p^{n+1})^{\vee}.$ This is a vector space of dimension $D=\binom{d+n}{d}-1$,
whenever $p$ is a non-singular point. The fact that $X\subset\C^s$ implies that
$T^n_pX\subset T^n_p\C^s\cong\C^E$, where $E=\binom{s+n}{s}-1$, that is, we can see $T^n_pX$ as an element
of the Grassmanian $Gr(D,\C^E)$. Let $S(X)$ be the singular locus of $X$. Now consider the Gauss map:
$$G_n:X\setminus S(X)\rightarrow Gr(D,\C^E),\mbox{ }\mbox{ }\mbox{ }p\mapsto T^n_pX.$$
Denote by $\Na$ the Zariski closure of the graph of $G_n$. Call $\pi_n$ the restriction to $\Na$ of
the projection of $X\times Gr(D,\C^E)$ to $X$. When $n=1$, the pair $(\Na,\pi_n)$ is usually called the
\textit{Nash modification} of $X$. For $n>1$, $(\Na,\pi_n)$ is called the \textit{higher Nash blowup} of $X$.
This construction gives a canonical modification of an algebraic variety that replaces singular points
by limits of sequences $\{T^n_{p_i}X\}$, where $\{p_i\}\subset X$ is any sequence of non-singular points
converging to a singular one.
\\
\\
Unfortunately, despite of being a natural and geometrically attractive modification, it is hard to compute
in general. The goal of this note is to deal with this problem, to some extent, in the case of hypersurfaces.
We will start by defining in Section 1 a generalization of the Jacobian matrix that involves
also higher-order derivatives, which is more suitable to this context. Using this matrix, we will give in
Section 2 some higher-order criteria of non-singularity. Next, we will prove in Section 3
that the spaces $T^n_pX$ can be identified with the kernel of the higher-order Jacobian, as with the
tangent space.
\\
\\
In the last section we will give some applications of the previous results. Firstly, we will generalize
a method proposed by D. O'Shea which computes limits of tangent spaces to a singular point of a hypersurface
(see \cite{Sh}). This method, along with the theory of Gr\"obner bases, will allow us to compute
examples showing some interesting phenomena of the set of limits of spaces $T^n_pX$. Later,
using some results of O. Villamayor appearing in \cite{V}, we will explicitly describe the ideal whose
blowup gives the higher Nash blowup by means of the higher-order Jacobian matrix.
\\
\\
As a final application, we will study a higher-order version of the following theorem due to A. Nobile:
the Nash modification of a variety is an isomorphism if and only if the variety is non-singular (see \cite{No}).
We will prove the analogous statement for the higher-order Nash blowup of normal hypersurfaces.
To that end, we will show that the singular locus of a hypersurface coincides with the zero set of
the ideal whose blowup gives the higher Nash blowup. We will also compute some examples of
singular plane curves where the second-order analogue of Nobile's theorem holds as well.


\section{A higher-order Jacobian matrix}\label{s. higher-order jacobian}

The first thing we are going to do is to define a higher-order version of the Jacobian
matrix of a polynomial. We begin by presenting an example to illustrate the idea of
the definition.
\\
\\
Let $F(x,y)=x^3-y^2\in\C[x,y]$. Let $p=(a,b)\in X=\V(F)$. The Jacobian matrix of $F$
evaluated at $p$ is defined as:
\[
\Jc(F)_{|p}:=
\begin{pmatrix}
3x^2& 	-2y
\end{pmatrix}_{|p}
\]
We want to define another matrix involving also higher-order derivatives that generalizes the Jacobian matrix.
Let $\ida_p=\langle x-a,y-b \rangle\subset\C[x,y]$. Consider the following linear map:
\begin{align}
\theta:\ap&\rightarrow\C^5\notag\\
f&\mapsto \Big(\frac{\partial f}{\partial x},\frac{\partial f}{\partial y},
\frac{1}{2!}\frac{\partial^2 f}{\partial x^2},\frac{\partial^2 f}{\partial x \partial y},
\frac{1}{2!}\frac{\partial^2 f}{\partial y^2}\Big)_{|p}.\notag
\end{align}
Let $\bb=\langle F \rangle$. Notice that $\bb\subset\ap$. Let $g\cdot F\in\bb$, where
$g\in\C[x,y]$. Using repeatedly the Leibniz rule and the fact $F(p)=0$, we can write
$\theta(gF)$ as follows:
\begin{align}
\theta(gF)=&g(p)\cdot(3x^2,-2y,3x,0,-1)_{|p}+\notag\\
&\frac{\partial g}{\partial x}(p)\cdot(F,0,3x^2,-2y,0)_{|p}+\notag\\
&\frac{\partial g}{\partial y}(p)\cdot(0,F,0,3x^2,-2y)_{|p}.\notag
\end{align}
Let
\begin{align}\label{e. Jac2(x3-y2)}
\Jc_2(F):=
\begin{pmatrix}
3x^2& 	-2y& 	3x&  	 0&     -1\\
F&  	 0&    3x^2&   -2y&      0\\
0&       F&     0&      3x^2&   -2y
\end{pmatrix}
\end{align}
Thus
\[
\theta(gF)=\Jc_2(F)^t|_p\cdot
\begin{pmatrix}
g(p)\\
\frac{\partial g}{\partial x}(p)\\
\frac{\partial g}{\partial y}(p)
\end{pmatrix}
\]
We call $\Jc_2(F)$ the \textit{Jacobian matrix of order 2} of $F$.
Now we proceed exactly as in this example to define a higher-order Jacobian of a polynomial.
First recall the multi-index notation. Let $\alpha=(\alpha_1,\ldots,\alpha_s)$, $\beta=(\beta_1,\ldots,\beta_s)\in\N^s$:
\begin{itemize}
\item $\alpha\leq\beta\Leftrightarrow\alpha_i\leq\beta_i\mbox{ }\forall i\in\{1,\ldots,s\}$.
\item $|\alpha|=\alpha_1+\cdots+\alpha_s$.
\item $\alpha!=\alpha_1!\cdot\alpha_2!\cdots\alpha_s!$.
\item $\binom{\alpha}{\beta}=\binom{\alpha_1}{\beta_1}\binom{\alpha_2}{\beta_2}\cdots
\binom{\alpha_s}{\beta_s}=\frac{\alpha!}{\beta!(\alpha-\beta)!}$.
\item $\partial^{\alpha}=\partial^{\alpha_1}\partial^{\alpha_2}\cdots\partial^{\alpha_s}$.
\end{itemize}
Using this notation, the general Leibniz rule states that
$$\partial^{\alpha}(g\cdot f)=
\sum_{\{\beta|\beta\leq\alpha\}}\binom{\alpha}{\beta}\partial^{\alpha-\beta}f\partial^{\beta}g$$
for any $f,g\in\Po$. If we define $\partial^{\alpha-\beta}f=0$ when $\alpha_i<\beta_i$ for some
$1\leq i \leq s$, then the general Leibniz rule can also be written as:
\begin{align}\label{e. general Leibniz rule}
\partial^{\alpha}(g\cdot f)=
\sum_{\{\beta|0\leq|\beta|\leq|\alpha|\}}\binom{\alpha}{\beta}\partial^{\alpha-\beta}f\partial^{\beta}g.
\end{align}
Let $F\in\Po$ and $p=(a_1,\ldots,a_s)\in X=\V(F)\subset\C^s$.
Let $\ida_p=\langle x_1-a_1,\ldots,x_s-a_s\rangle\subset\Po$. Fix $n\in\N$.
Let $N=\binom{n+s}{s}$ and consider the following linear map:
\begin{align}\label{e. theta}
\theta:\ap&\rightarrow\C^{N-1}\\
f&\mapsto\Big(\frac{\partial^{\alpha}f}{\alpha!}|1\leq|\alpha|\leq n\Big)_{|p}.\notag
\end{align}
We arrange this vector increasingly using graded lexicographical order, where $\alpha_1<\alpha_2<\ldots<\alpha_s$.
\\
\\
Let $\bb=\langle F \rangle$. Notice that $\bb\subset\ap$. Let $g\cdot F\in\bb$, where $g\in\Po$.
Using the general Leibniz rule (\ref{e. general Leibniz rule}) and the fact $F(p)=0$,
we can write $\theta(gF)$ as follows (recall that we defined $\partial^{\alpha-\beta}F=0$ if $\alpha_i<\beta_i$
for some $i$):
\begin{equation}\label{e. theta(gF)}
\theta(gF)=\sum_{\{\beta|0\leq|\beta|\leq n-1\}}\partial^{\beta}g(p)\cdot
\Big(\binom{\alpha}{\beta}\frac{\partial^{\alpha-\beta}F}{\alpha!}|1\leq|\alpha|\leq n\Big)_{|p}.
\end{equation}
Let $r_{\beta}:=\beta!\cdot\Big(\binom{\alpha}{\beta}
\frac{\partial^{\alpha-\beta}F}{\alpha!}|1\leq|\alpha|\leq n\Big)$, where $\beta$ is such that $0\leq|\beta|\leq n-1$.
We multiply by $\beta!$ to obtain later some nice properties among these vectors
(see lemma \ref{l. remarks c-beta} below). As before, we arrange $r_{\beta}$
using graded lexicographical order on $\alpha$. There are $M=\binom{n+s-1}{s}$ such vectors.

\begin{lem}\label{l. remarks c-beta}
Fix $\beta$ such that $0\leq|\beta|\leq n-1$.
\begin{itemize}
\item[(i)]The $\alpha-$entry of $r_{\beta}$ satisfies:
\[r_{\beta,\alpha}=
\left\{
\begin{array}{rll}
&0,             & \mbox{ if }\alpha_i<\beta_i \mbox{ for some } 1\leq i \leq s,\\
&\frac{\partial^{\alpha-\beta}F}{(\alpha-\beta)!},  & \mbox{ if }\alpha\geq\beta.
\end{array}
\right.
\]
In particular, $r_{(0,\ldots,0)}=\Big(\frac{\partial^{\alpha}F}{\alpha!}|1\leq|\alpha|\leq n\Big)$.
\item[(ii)] Let $\alpha$ be such that $1\leq|\alpha|\leq n-|\beta|$. Then,
$r_{\beta,\beta+\alpha}=r_{(0,\ldots,0),\alpha}$.
\end{itemize}
Now assume $1\leq|\beta|\leq n-1$.
\begin{itemize}
\item[(iii)]If $\alpha\neq\beta+\alpha'$ for all $\alpha'$ such that
$0\leq|\alpha'|\leq n-|\beta|$ then $r_{\beta,\alpha}=0$.
\item[(iv)] If $\alpha<_{grlex}\beta+(1,0,\ldots,0)$ and $\alpha\neq\beta$ then $r_{\beta,\alpha}=0$.
\item[(v)]$r_{\beta,\beta}=F$.
\item[(vi)]The only possibly non-zero entries of $r_{\beta}$ are those of the form
$r_{\beta,\beta+\alpha}$, for some $\alpha$ such that $0\leq|\alpha|\leq n-|\beta|$.
In particular, excepting $r_{\beta,\beta}$, these possibly non-zero entries correspond to
shifting by $\beta$ the $\alpha-$entries of $r_{(0,\ldots,0)}$, where $1\leq|\alpha|\leq n-|\beta|$,
i.e., these entries are (multiples of) the partial derivatives of $F$ of order at most $n-|\beta|$.
\end{itemize}
\end{lem}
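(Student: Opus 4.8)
The plan is to derive everything from part (i), which is a single direct computation, and then to obtain (ii)--(vi) as consequences of (i) together with elementary facts about the graded lexicographical order. The engine of the whole argument is the factorial identity $\binom{\alpha}{\beta}=\frac{\alpha!}{\beta!(\alpha-\beta)!}$, valid componentwise whenever $\alpha\ge\beta$.

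To prove (i), I would substitute the definition $r_{\beta,\alpha}=\beta!\,\binom{\alpha}{\beta}\,\frac{\partial^{\alpha-\beta}F}{\alpha!}$ and apply that identity: the factors $\beta!$ and $\alpha!$ cancel, leaving exactly $\frac{\partial^{\alpha-\beta}F}{(\alpha-\beta)!}$, which is the second line. When $\alpha_i<\beta_i$ for some $i$ the one-variable binomial $\binom{\alpha_i}{\beta_i}$ vanishes (equivalently, the convention $\partial^{\alpha-\beta}F=0$ applies), giving the first line. The ``in particular'' statement is the special case $\beta=(0,\ldots,0)$, where $\beta!=1$ and $\binom{\alpha}{\beta}=1$.

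With (i) in hand several parts are immediate. For (ii), since $\beta+\alpha\ge\beta$ (and $1\le|\beta+\alpha|\le n$), the second line of (i) at the index $\beta+\alpha$ gives $r_{\beta,\beta+\alpha}=\frac{\partial^{\alpha}F}{\alpha!}=r_{(0,\ldots,0),\alpha}$. For (v), taking $\alpha=\beta$ in (i) yields $\alpha-\beta=(0,\ldots,0)$, hence $r_{\beta,\beta}=\partial^{0}F/0!=F$. For (iii), I would observe that $\alpha=\beta+\alpha'$ for some multi-index $\alpha'$ with $0\le|\alpha'|\le n-|\beta|$ is possible exactly when $\alpha\ge\beta$: indeed $\alpha'=\alpha-\beta\ge 0$ then has $|\alpha'|=|\alpha|-|\beta|\le n-|\beta|$ automatically, while conversely $\alpha'\ge 0$ forces $\alpha\ge\beta$. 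Thus the hypothesis of (iii) forces $\alpha\not\ge\beta$, and (i) gives $r_{\beta,\alpha}=0$. Part (vi) then merely packages (i) and (iii): any nonzero entry sits at an index $\gamma\ge\beta$, which I write as $\gamma=\beta+\alpha$ with $0\le|\alpha|\le n-|\beta|$, and (ii) (resp.\ (v)) identifies these as the shifted derivatives $\frac{\partial^{\alpha}F}{\alpha!}$ for $1\le|\alpha|$ (resp.\ as $F$ when $\alpha=0$).

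The one genuinely non-automatic step is (iv), which is where I expect the main obstacle. By (i) I may again assume $\alpha\ge\beta$, and since $\alpha\ne\beta$ this forces $|\alpha|>|\beta|$; the task is to rule out $\alpha<_{grlex}\beta+(1,0,\ldots,0)$. I would split on total degree: if $|\alpha|>|\beta|+1$ the degree comparison already gives $\alpha>_{grlex}\beta+(1,0,\ldots,0)$; and if $|\alpha|=|\beta|+1$, then $\alpha\ge\beta$ forces $\alpha=\beta+e_j$ for exactly one unit vector $e_j$. The crux is that, under the chosen order (graded lex with $x_1<x_2<\cdots<x_s$, so that a higher power of a smaller-indexed variable yields a smaller monomial), the element $\beta+e_1=\beta+(1,0,\ldots,0)$ is the grlex-minimum of $\{\beta+e_j:1\le j\le s\}$; hence every $\alpha=\beta+e_j\ge_{grlex}\beta+(1,0,\ldots,0)$, with equality only for $j=1$. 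Either way $\alpha\not<_{grlex}\beta+(1,0,\ldots,0)$, contradicting the hypothesis, so the surviving case is $\alpha\not\ge\beta$ and (i) yields $r_{\beta,\alpha}=0$. The only delicate point is pinning down the ordering convention ``$\alpha_1<\alpha_2<\cdots<\alpha_s$'' precisely enough to make the minimality of $\beta+(1,0,\ldots,0)$ rigorous; once that is fixed, the comparison is a short calculation.
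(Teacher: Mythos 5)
Your proof is correct and follows essentially the same route as the paper: part (i) is the definitional computation with $\binom{\alpha}{\beta}=\frac{\alpha!}{\beta!(\alpha-\beta)!}$, parts (ii), (iii), (v), (vi) are the same reductions to (i), and your case split in (iv) (total degree $|\beta|+1$ versus larger, plus the grlex-minimality of $\beta+(1,0,\ldots,0)$) is just a slightly more explicit version of the paper's one-line argument that $\alpha=\beta+\alpha'$ with $|\alpha'|\geq 1$ forces $\alpha\geq_{grlex}\beta+(1,0,\ldots,0)$ by translation-compatibility of the monomial order. No gaps; the ordering convention you flag as delicate is exactly the one the paper fixes ($x_1<x_2<\cdots<x_s$, so $(1,0,\ldots,0)$ is the grlex-least index of positive degree).
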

\begin{proof}
\begin{itemize}
\item[(i)]This is just the definition of $r_{\beta}$.
\item[(ii)]Indeed, by the hypothesis,
$|\beta+\alpha|\leq n$, so it makes sense to consider $r_{\beta,\beta+\alpha}$. Now apply $(i)$.
\item[(iii)]Suppose $\alpha\neq\beta+\alpha'$ for all $\alpha'$ such that $0\leq|\alpha'|\leq n-|\beta|$.
We claim that $\alpha_i<\beta_i$ for some $1\leq i \leq s$. This is clear:
if $\alpha_i\geq\beta_i$ for all $i$, then $\alpha=\beta+(\alpha-\beta)$
and $0\leq|\alpha-\beta|=|\alpha|-|\beta|\leq n-|\beta|$, which is a contradiction.
Therefore, $(iii)$ follows from $(i)$.
\item[(iv)]This is a direct consequence of $(iii)$. Indeed, if $\alpha=\beta+\alpha'$ where $1\leq|\alpha'|$,
then $\alpha=\beta+\alpha'\geq_{grlex}\beta+(1,0,\ldots,0)$, which contradicts the hypothesis on $\alpha$.
\item[(v)]Indeed, $r_{\beta,\beta}=\frac{\partial^{\beta-\beta}F}{(\beta-\beta)!}=F$.
\item[(vi)]This is just a consequence of $(ii)$, $(iii)$, $(iv)$, and $(v)$.
\end{itemize}
\end{proof}

\begin{defi}
Let $\Jc_n(F)$ be the matrix whose rows are the $M$ vectors $r_{\beta}$. We arrange these rows
using graded lexicographical order on $\beta$, where $\beta_1<\beta_2<\ldots<\beta_s$.
In particular, $\Jc_n(F)$ is a $(M\times N-1)$-matrix. We call $\Jc_n(F)$ the
\textit{Jacobian matrix of order n} or the \textit{higher-order Jacobian matrix}.
\end{defi}

The higher-order Jacobian matrix satisfies the following properties.

\begin{pro}\label{p. higher-order jacobian}
Let $F\in\Po$, $p\in X=\V(F)\subset\C^s$, and $\bb=\langle F \rangle$.
\begin{itemize}
\item[(a)] $\Jc_1(F)$ is the usual Jacobian matrix of $F$.
\item[(b)] $\theta(\bb)=\Img(\Jc_n(F)^t_{|p})$, where $\theta$ was defined in (\ref{e. theta}) and
$\Img$ denotes the image of the linear map induced by $\Jc_n(F)^t_{|p}$.
\item[(c)] Suppose that F is a reduced non-constant polynomial. Suppose $p\in X$ is non-singular
and assume $\partial^{(1,0,\ldots,0)}F(p)\neq0$. Under this assumption, $\Jcn_{|p}$ is in
row echelon form. In addition, every row of $\Jcn_{|p}$ has $\partial^{(1,0,\ldots,0)}F(p)$ as pivot.
\end{itemize}
\end{pro}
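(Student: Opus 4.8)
The plan is to dispatch the three parts in order, relying throughout on Lemma \ref{l. remarks c-beta} and on formula (\ref{e. theta(gF)}). Part (a) is immediate: when $n=1$ we have $N-1=s$ and $M=\binom{s}{s}=1$, so $\Jc_1(F)$ is the single row $r_{(0,\ldots,0)}$, which by Lemma \ref{l. remarks c-beta}(i) equals $(\partial^{\alpha}F/\alpha!\mid|\alpha|=1)$. Since $|\alpha|=1$ forces $\alpha=e_i$ with $\alpha!=1$, this is exactly $(\partial F/\partial x_1,\ldots,\partial F/\partial x_s)$, the usual Jacobian.

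For part (b), I would first rewrite (\ref{e. theta(gF)}) in terms of the rows of the matrix. Using $r_{\beta}=\beta!\cdot(\binom{\alpha}{\beta}\partial^{\alpha-\beta}F/\alpha!\mid 1\le|\alpha|\le n)$, formula (\ref{e. theta(gF)}) becomes
\[
\theta(gF)=\sum_{\{\beta\mid 0\le|\beta|\le n-1\}}\frac{\partial^{\beta}g(p)}{\beta!}\,r_{\beta\,|p},
\]
so $\theta(gF)$ is a linear combination of the rows of $\Jc_n(F)_{|p}$. Since $\Img(\Jc_n(F)^t_{|p})$ is precisely the span of those rows, the inclusion $\theta(\bb)\subseteq\Img(\Jc_n(F)^t_{|p})$ follows at once. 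For the reverse inclusion I would realize each row individually: fixing a row index $\beta_0$ and taking $g=(x_1-a_1)^{\beta_{0,1}}\cdots(x_s-a_s)^{\beta_{0,s}}$ gives $\partial^{\beta}g(p)/\beta!=\delta_{\beta\beta_0}$, whence $\theta(gF)=r_{\beta_0\,|p}$. As $\theta(\bb)$ is a linear subspace containing every row, it contains their span, and equality follows.

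Part (c) is the substantive one, and the idea is to pin down the pivot of each row. Fix a row index $\beta$ with $0\le|\beta|\le n-1$ and set $e_1=(1,0,\ldots,0)$; since $1\le|\beta+e_1|=|\beta|+1\le n$, the column indexed by $\beta+e_1$ exists. I claim that, at $p$, the first nonzero entry of row $\beta$ occurs in this column. When $|\beta|\ge 1$, every entry in a column $\alpha<_{grlex}\beta+e_1$ with $\alpha\ne\beta$ vanishes by Lemma \ref{l. remarks c-beta}(iv), whereas the entry in column $\alpha=\beta$ equals $F$ by (v) and so vanishes at $p\in\V(F)$; meanwhile (i) gives that the entry in column $\beta+e_1$ equals $\partial^{e_1}F/e_1!=\partial F/\partial x_1$, which is nonzero at $p$ by hypothesis. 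When $\beta=(0,\ldots,0)$ the column $e_1$ is already the smallest column, and its entry is again $\partial F/\partial x_1$. Thus every row has its pivot in column $\beta+e_1$ with value $\partial^{(1,0,\ldots,0)}F(p)$, which establishes the final assertion of (c); note that only $F(p)=0$ and the assumed nonvanishing of $\partial^{(1,0,\ldots,0)}F(p)$ are used here.

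It then remains to check that the pivot columns strictly increase as we descend the rows. Because the rows are ordered by graded lexicographical order on $\beta$ and this is a monomial order, translation by $e_1$ is strictly order-preserving, so $\beta<_{grlex}\beta'$ implies $\beta+e_1<_{grlex}\beta'+e_1$; hence the pivots move strictly to the right, and since each pivot is the leading entry of its row, all entries below a pivot vanish automatically. This is exactly row echelon form. The step I expect to demand the most care is the order bookkeeping underlying the pivot computation: one must use that $e_1$ is the $grlex$-minimal exponent of positive degree (this is what makes Lemma (iv) clear everything strictly before column $\beta+e_1$) and treat the row $\beta=(0,\ldots,0)$ separately, since Lemma (iv) is stated only for $|\beta|\ge 1$.
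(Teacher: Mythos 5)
Your proof is correct and follows essentially the same route as the paper: part (a) by inspection, part (b) from formula (\ref{e. theta(gF)}) together with the fact that the coefficient vectors $\big(\partial^{\beta}g(p)/\beta!\big)$ can be prescribed arbitrarily (you realize the standard basis vectors via $g=(x-a)^{\beta_0}$, which is a concrete instance of the paper's existence claim), and part (c) from items (ii), (iv), (v) of Lemma \ref{l. remarks c-beta}, identifying the pivot of row $\beta$ in column $\beta+(1,0,\ldots,0)$. Your explicit verification that $grlex$ is translation-invariant, so the pivot columns strictly increase down the rows, is a detail the paper leaves implicit but is exactly what its proof relies on.
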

\begin{proof}
$(a)$ is immediate by definition of $\Jcn$. $(b)$ follows from (\ref{e. theta(gF)}) and the fact
that for any $(\lambda_1,\ldots,\lambda_M)\in\C^M$ there exists $g\in\ida_p$ such that
$$\Big(\frac{\partial^{\beta}g}{\beta!}|0\leq|\beta|\leq n-1\Big)_{|p}=(\lambda_1,\ldots,\lambda_M).$$
To prove $(c)$, first notice that, for every $0\leq|\beta|\leq n-1$,
$r_{\beta,\beta+(1,0,\ldots,0)}=\partial^{(1,0,\ldots,0)}F$,
according to $(ii)$ of lemma \ref{l. remarks c-beta}. Now the fact that $\Jcn_{|p}$ is in row
echelon form follows from $(iv)$ and $(v)$ of lemma \ref{l. remarks c-beta}.
\end{proof}


\section{Higher-order criteria of non-singularity}

In this section we will give some criteria of non-singularity using the
higher-order Jacobian matrix or some other higher-order data.


\subsection{Higher-order version of the Jacobian criterion}

Our first goal is to generalize the well-known Jacobian criterion for non-singularity
(see \cite{H}, Ch. 1, Theorem 5.1). The result is the following:

\begin{teo}\label{t. higher order jacobian}
Let $F\in\Po$ be a reduced non-constant polynomial. Let $p\in X=\V(F)\subset\C^s$. For
$n\in\N$, let $M=\binom{n+s-1}{s}$. Then
$$p\mbox{ is non-singular}\Leftrightarrow rank\mbox{ }Jac_n(F)_{|p}=M.$$
\end{teo}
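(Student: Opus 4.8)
The plan is to prove both implications by exploiting the structure of $\Jcn_{|p}$ established in Lemma \ref{l. remarks c-beta} and Proposition \ref{p. higher-order jacobian}. The key observation is that the rank of $\Jcn_{|p}$ is a coordinate-independent invariant: both the singularity of $p$ and the dimension of $\theta(\bb)$ are unchanged under an affine change of coordinates fixing $p$ (which amounts to a linear change on $\ap/\ap^{n+1}$ that carries $\theta$ and the image of $\Jc_n(F)^t$ isomorphically). Hence for the whole argument I may assume after a linear change that at least one first-order partial is nonzero whenever $p$ is non-singular, and in particular arrange $\partial^{(1,0,\ldots,0)}F(p)\neq 0$.

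For the forward implication, suppose $p$ is non-singular. After the coordinate normalization above, part $(c)$ of Proposition \ref{p. higher-order jacobian} applies: $\Jcn_{|p}$ is in row echelon form with every one of its $M$ rows having the nonzero pivot $\partial^{(1,0,\ldots,0)}F(p)$ in the distinct columns indexed by $\beta+(1,0,\ldots,0)$. Distinct $\beta$ (with $0\le|\beta|\le n-1$) give distinct pivot columns, so the $M$ rows are linearly independent and $\rk \Jcn_{|p}=M$. This direction is essentially immediate from what is already proved.

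The reverse implication is where the real work lies, so I would prove the contrapositive: if $p$ is singular, then $\rk\Jcn_{|p}<M$. At a singular point all first-order partials vanish, $\partial^{(1,0,\ldots,0)}F(p)=\cdots=\partial^{(0,\ldots,0,1)}F(p)=0$, so by Lemma \ref{l. remarks c-beta}$(i)$ the row $r_{(0,\ldots,0)}|_p$ has \emph{all} its first-order entries equal to zero; more generally every entry of $r_{\beta}|_p$ of the shifted-first-order type $r_{\beta,\beta+(1,0,\ldots,0)}$ also vanishes. The goal is to manufacture a nontrivial linear dependence among the rows. I expect the cleanest route is to return to the defining map $\theta$ and part $(b)$ of Proposition \ref{p. higher-order jacobian}, which identifies the row space of $\Jcn_{|p}$ (equivalently $\Img(\Jc_n(F)^t_{|p})$) with $\theta(\bb)$; thus $\rk\Jcn_{|p}=\dim_{\C}\theta(\bb)$. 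Since $\bb=\langle F\rangle$ is generated by one element and $\theta$ is $\C$-linear with $\theta(gF)$ depending only on the jet $(\partial^{\beta}g(p))_{0\le|\beta|\le n-1}$, the target $\theta(\bb)$ is spanned by exactly the $M$ rows, and a rank drop is equivalent to these $M$ spanning vectors being dependent. The main obstacle is to show that singularity \emph{forces} such a dependence: I anticipate arguing that when all first derivatives of $F$ vanish at $p$, the vanishing of the would-be pivot entries collapses the echelon structure, so that the row $r_{(0,\ldots,0)}|_p$ lies in the span of the higher rows $r_{\beta}|_p$ with $|\beta|\ge 1$ — indeed by Lemma \ref{l. remarks c-beta}$(ii)$,$(vi)$ every nonzero entry of $r_{(0,\ldots,0)}|_p$ is a partial of $F$ of order between $2$ and $n$ at $p$, and these same derivatives reappear as shifted entries in the higher rows, allowing one to cancel $r_{(0,\ldots,0)}|_p$ against an appropriate combination of the $r_{\beta}|_p$. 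Making this cancellation precise — tracking exactly which shifted copies of each derivative $\partial^{\gamma}F(p)$ ($2\le|\gamma|\le n$) appear across the rows and verifying they admit a genuine linear relation rather than merely coinciding entrywise — is the delicate combinatorial bookkeeping that constitutes the crux of the proof.
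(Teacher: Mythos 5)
Your forward implication is correct and is essentially the paper's own argument: after a linear change of coordinates (whose harmlessness you rightly justify, since both singularity and $\dim_{\C}\theta(\bb)=\rk\mbox{ }\Jcn_{|p}$ are invariant under such a change), part $(c)$ of Proposition \ref{p. higher-order jacobian} puts $\Jcn_{|p}$ in row echelon form with a nonzero pivot in each of its $M$ rows, so the rank is $M$.

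The reverse implication, however, has a genuine gap, and the route you sketch is not merely incomplete but cannot be carried out. You propose to exhibit the rank drop at a singular point by showing that $r_{(0,\ldots,0)}|_p$ lies in the span of the rows $r_{\beta}|_p$ with $|\beta|\geq 1$, and you explicitly leave the required cancellation as an unproven ``crux.'' The paper's own running example already refutes this plan: for $F=x^3-y^2$, $p=0$, $n=2$, the matrix $\Jcd_{|0}$ has first row $(0,0,0,0,-1)\neq 0$, while both rows with $|\beta|=1$ vanish identically at $0$. So $r_{(0,0)}|_0$ is certainly not in the span of the higher rows; the dependence among the rows comes from the higher rows themselves being zero, not from any cancellation against them. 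That observation is precisely what makes the proof short, and it is what the paper does: by Lemma \ref{l. remarks c-beta}$(vi)$, any row $r_{\beta}$ with $|\beta|=n-1$ (for instance the last row $r_{(0,\ldots,0,n-1)}$) has as its only possibly nonzero entries $r_{\beta,\beta}=F$, which vanishes because $p\in X$, and shifted copies of the \emph{first-order} partials $\partial^{\alpha}F$, $|\alpha|=1$, which all vanish when $p$ is singular. Hence at a singular point that entire row is zero and $\rk\mbox{ }\Jcn_{|p}<M$ immediately; equivalently, if $\rk\mbox{ }\Jcn_{|p}=M$ then that row is nonzero, so some first partial of $F$ is nonzero at $p$ and the classical Jacobian criterion gives non-singularity. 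Replacing your spanning argument by this one-line observation closes the gap.
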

\begin{proof}
Suppose $p$ is non-singular and assume that $\partial^{(1,0,\ldots,0)}F(p)\neq0$.
According to $(c)$ of proposition \ref{p. higher-order jacobian}, $\Jcn_{|p}$
is in row echelon form with $\partial^{(1,0,\ldots,0)}F(p)$ as pivots in every row.
This implies that the rows of $\Jcn_{|p}$ are linearly independent, i.e.,
$\rk\mbox{ }\Jcn_{|p}=M$.
\\
\\
Suppose now that $\rk\mbox{ }\Jcn_{|p}=M$. According to $(vi)$
of lemma \ref{l. remarks c-beta}, ${r_{(0,\ldots,0,n-1)}}_{|p}$ (the last row of $\Jcn_{|p}$)
contains only first partial derivatives of $F$ as possibly non-zero entries.
If all these derivatives evaluated at $p$ were zero then $\rk\mbox{ }\Jcn_{|p}<M$.
Thus, at least one first partial derivative of $F$ evaluated at $p$ is non-zero. We conclude
that $p$ is a non-singular point by the usual Jacobian criterion.
\end{proof}

The previous theorem has the following immediate consequence.

\begin{coro}\label{c. singular set = Jac_n}
Let $\Jn\subset\Po/\langle F \rangle$ be the ideal generated by the $(M \times M)$-minors of $\Jcn$.
Then the singular locus of $X=\V(F)$ corresponds to the zero set of $\Jn$.
\end{coro}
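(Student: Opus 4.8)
The plan is to translate the rank condition of Theorem~\ref{t. higher order jacobian} into a statement about the vanishing of minors, using nothing more than the elementary linear-algebra dictionary between rank and maximal minors.

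First I would record a dimension count showing that the $(M\times M)$-minors are precisely the \emph{maximal} minors of $\Jcn$. The matrix $\Jcn$ has $M=\binom{n+s-1}{s}$ rows and $N-1=\binom{n+s}{s}-1$ columns, and Pascal's identity $\binom{n+s}{s}=\binom{n+s-1}{s}+\binom{n+s-1}{s-1}$ together with $\binom{n+s-1}{s-1}\geq 1$ gives $M\leq N-1$. Thus every square $(M\times M)$-submatrix is obtained by selecting $M$ of the $N-1$ columns, and these submatrices exhaust all maximal square submatrices of $\Jcn$.

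Next I would invoke the standard fact that a matrix $A$ with $M$ rows satisfies $\rk A=M$ if and only if at least one of its $(M\times M)$-minors is non-zero; equivalently, $\rk A<M$ if and only if all of them vanish. Applying this to $\Jcn_{|p}$ and combining it with Theorem~\ref{t. higher order jacobian}, for every $p\in X$ one obtains
$$p\text{ is non-singular}\ \Longleftrightarrow\ \rk\ \Jcn_{|p}=M\ \Longleftrightarrow\ \text{some }(M\times M)\text{-minor of }\Jcn\text{ is non-zero at }p.$$
Taking the contrapositive, $p$ is singular precisely when every $(M\times M)$-minor vanishes at $p$. Since these minors generate $\Jn$ in the coordinate ring $\Po/\langle F\rangle$, the locus of such $p$ is by definition the zero set $\V(\Jn)\subset X$, whence $S(X)=\V(\Jn)$.

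There is essentially no obstacle here: once Theorem~\ref{t. higher order jacobian} is available the corollary is genuinely immediate. The only point I would not leave implicit is the inequality $M\leq N-1$, which guarantees that the $(M\times M)$-minors really are the maximal minors and hence that the condition ``$\rk=M$'' is correctly detected by their non-vanishing rather than being vacuous.
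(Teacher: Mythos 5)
Your proposal is correct and matches the paper's intent exactly: the paper gives no written proof, stating the corollary as an ``immediate consequence'' of Theorem~\ref{t. higher order jacobian}, and your argument simply makes explicit the standard dictionary between $\rk\ \Jcn_{|p}=M$ and the non-vanishing of some $(M\times M)$-minor at $p$. Your extra check that $M\leq N-1$ (via Pascal's identity), so that these minors are genuinely the maximal ones, is a sensible detail to record but does not constitute a different approach.
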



\subsection{Some other higher-order criteria of non-singularity}

In this section we prove some other generalizations of well-known results regarding a
characterization of non-singularity. We would like to comment that we do not know if
the results of this section are particular cases of more general results. On the other
hand, the proofs given here are mostly combinatorial.

\begin{lem}\label{l. natural isom}
Let $A$ be a commutative ring with unity and $\m$ a maximal ideal of $A$. Then the natural morphism
$$\frac{\m}{\m^{n+1}}\rightarrow\frac{\m A_{\m}}{\m^{n+1}A_{\m}};\mbox{ }\mbox{ }\mbox{ }
\bar f\mapsto \Big[\frac{f}{1}\Big],$$
is an isomorphism.
\end{lem}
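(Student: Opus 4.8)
The plan is to reduce the statement to the observation that localizing the ring $A/\m^{n+1}$ at $\m$ has no effect, because $A/\m^{n+1}$ is already local with maximal ideal $\m/\m^{n+1}$. The crux is therefore the following claim: every $s\in A\setminus\m$ becomes a unit in $A/\m^{n+1}$. To see this, first I would use that $\m$ is maximal and $s\notin\m$ to write $as+m=1$ for some $a\in A$ and $m\in\m$. Passing to $A/\m^{n+1}$, the class $\bar m$ is nilpotent (indeed $\bar m^{n+1}=0$), so $1-\bar m$ is a unit with inverse $1+\bar m+\cdots+\bar m^{n}$; since $\bar a\bar s=1-\bar m$, the class $\bar s$ is a unit as well. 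Hence $A/\m^{n+1}$ is a local ring and $\m/\m^{n+1}$ is its unique maximal ideal.

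With the claim in hand, I would finish in either of two equivalent ways. Conceptually, localization is exact and commutes with quotients, so there is a natural isomorphism $A_{\m}/\m^{n+1}A_{\m}\cong (A/\m^{n+1})_{\m}$; but inverting the images of $A\setminus\m$ in $A/\m^{n+1}$ does nothing, since those images are already units, so the localization map $A/\m^{n+1}\to(A/\m^{n+1})_{\m}$ is an isomorphism. Restricting this ring isomorphism to maximal ideals identifies $\m/\m^{n+1}$ with $\m A_{\m}/\m^{n+1}A_{\m}$, and one checks that this restriction is exactly the map $\bar f\mapsto[f/1]$ of the statement.

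Alternatively, I would verify injectivity and surjectivity by hand. For injectivity, if $f\in\m$ satisfies $f/1\in\m^{n+1}A_{\m}$, then $uf\in\m^{n+1}$ for some $u\notin\m$; since $\bar u$ is a unit in $A/\m^{n+1}$ by the claim, $\bar u\bar f=0$ forces $f\in\m^{n+1}$. For surjectivity, a class is represented by $m/s$ with $m\in\m$ and $s\notin\m$; choosing $g\in A$ with $sg-1\in\m^{n+1}$ (possible since $\bar s$ is a unit) and setting $f=mg\in\m$, one computes $f/1-m/s=m(gs-1)/s$ with $m(gs-1)\in\m\cdot\m^{n+1}\subseteq\m^{n+1}$, so $f/1\equiv m/s$ modulo $\m^{n+1}A_{\m}$.

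The only genuinely substantive point is the claim that elements outside $\m$ are invertible modulo $\m^{n+1}$; once that nilpotency-of-$\bar m$ argument is in place, everything else is bookkeeping about fractions and about restricting a ring isomorphism to maximal ideals, so I do not expect a real obstacle here.
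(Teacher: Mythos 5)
Your proposal is correct, but it follows a genuinely different route from the paper. The paper proceeds by induction on $n$: the base case $n=1$ is cited from Liu, surjectivity of $A/\m^{n}\to A_{\m}/\m^{n}A_{\m}$ is proved by an explicit computation with fractions in $A_{\m}$ (iterating $as+m=1$ to express $\bigl[\frac{1}{s}\bigr]$ as $\bigl[\frac{a}{1}\bigr]+\bigl[\frac{am}{1}\bigr]+\cdots+\bigl[\frac{am^{n-1}}{1}\bigr]$ modulo $\m^{n}A_{\m}$), injectivity is deduced from the induction hypothesis via an argument on the kernel ideal, and the final passage from the ring statement to the statement about $\m/\m^{n+1}$ is obtained by tensoring with $\m$ over $A$, following Liu. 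You avoid induction entirely: your single key claim, that every $s\in A\setminus\m$ becomes a unit in $A/\m^{n+1}$ because $\bar a\bar s=1-\bar m$ with $\bar m$ nilpotent, makes $A/\m^{n+1}$ a local ring, so localizing at $\m$ does nothing, and the lemma follows either from exactness of localization or from your direct injectivity/surjectivity checks (both of which are sound; in particular the step ``$f/1\in\m^{n+1}A_{\m}$ implies $uf\in\m^{n+1}$ for some $u\notin\m$'' is the correct unwinding of the localized ideal). Note that the algebraic germ is the same in both proofs --- the geometric-series identity $(1-m)(1+m+\cdots+m^{n})=1-m^{n+1}$ --- but the paper runs it inside $A_{\m}$ at each inductive step, whereas you run it once inside $A/\m^{n+1}$. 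What your version buys is brevity and transparency: it is uniform in $n$, isolates the conceptual point (the quotient is already local, so localization is invisible), and sidesteps the paper's final tensoring step, which silently uses flatness of localization and the identification $\m\otimes_A(A/\m^{n})\cong\m/\m^{n+1}$ and is only sketched by reference to Liu. What the paper's version buys is that it stays entirely at the level of elementary manipulations of fractions plus a citation for $n=1$, never invoking the locality of the quotient ring or properties of localization as a functor.
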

\begin{proof}
We proceed by induction. For $n=1$ it is well known (see, for instance, \cite{L}, Chapter 4, Lemma 2.3).
Suppose it is true for $n-1$. Consider the natural homomorphism
\begin{align}
\varphi:\frac{A}{\m^{n}}&\rightarrow\frac{A_{\m}}{\m^{n}A_{\m}},\notag\\
\bar a&\mapsto\Big[\frac{a}{1}\Big].\notag
\end{align}
Let $s\in A\setminus\m$. Then there exist $a\in A$ and $m\in\m$ such that $as+m=1$. This implies
the following equalities in $A_{\m}$:
\begin{align}
\frac{a}{1}+\frac{m}{s}&=\frac{1}{s},\notag\\
\frac{am}{1}+\frac{m^2}{s}&=\frac{m}{s},\notag\\
&\vdots\notag\\
\frac{am^{n-1}}{1}+\frac{m^n}{s}&=\frac{m^{n-1}}{s}.\notag
\end{align}
But then, modulo $\m^{n}A_{\m}$, we have:
$\big[\frac{1}{s}\big]=\big[\frac{a}{1}\big]+\big[\frac{am}{1}\big]+\cdots+\big[\frac{am^{n-1}}{1}\big]$.
Thus, $\big[\frac{b}{s}\big]=\big[\frac{ab+abm+\ldots+abm^{n-1}}{1}\big]$, which implies that $\varphi$
is surjective.
\\
\\
Now we show that $\ker\varphi\subset A/\m^{n}$ is $\{\bar0\}$. $\ker\varphi$ corresponds to some ideal
$J\subset A$ satisfying $\m^{n}\subset J\subset\m$. We want to show that $J=\m^{n}$. Suppose that there
exists $f\in J\setminus\m^{n}\subset\m\setminus\m^{n}$. This means that $\bar0\neq\bar f\in\m(A/\m^{n})$,
but $[f/1]=[0/1]\in \m(A_{\m}/\m^n A_{\m})$. On the other hand, the homomorphism $\varphi$ restricted
to $\m(A/\m^{n})$ is the natural homomorphism $\m/\m^n\rightarrow\m A_{\m}/\m^{n}A_{\m}$, which is
an isomorphism by the induction hypothesis. This contradicts that $[f/1]=[0/1]$. Therefore $\varphi$
is an isomorphism. As in \cite{L}, Chapter 4, Lemma 2.3, applying the tensor $\otimes_A\m$ we conclude
the proof of the lemma.
\end{proof}

\begin{lem}\label{l. good dim monomial ideal}
Let $\ida\subset\Po$ be a monomial ideal. Assume that there exists
$1\leq i\leq s$ such that all monomials in $\ida$ are multiples of $x_i$.
In other words, assume that $\dim V(\ida)=s-1$. Let $\m:=\langle x_1,\ldots,x_s\rangle\subset\Po/\ida$.
Then
$$\dim_{\C}\frac{\m^n}{\m^{n+1}}\geq\binom{n+s-2}{s-2}.$$
\end{lem}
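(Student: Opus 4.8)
The plan is to reduce the whole statement to an elementary count of monomials. Write $\n=\langle x_1,\ldots,x_s\rangle\subset\Po$ for the maximal ideal upstairs, so that $\m=\n/\ida$ and $\m^n=(\n^n+\ida)/\ida$. The first step is to identify the graded piece $\m^n/\m^{n+1}$ with a quotient of vector spaces inside $\Po$ itself. Applying the second isomorphism theorem with $\n^n$ and $\n^{n+1}+\ida$, and using $\n^{n+1}\subset\n^n$ to simplify $\n^n\cap(\n^{n+1}+\ida)=\n^{n+1}+(\n^n\cap\ida)$, I would obtain
$$\frac{\m^n}{\m^{n+1}}\cong\frac{\n^n}{\n^{n+1}+(\n^n\cap\ida)}.$$

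The second step exploits that $\ida$ is a monomial ideal. The space $\n^n/\n^{n+1}$ has as a $\C$-basis the residues of the monomials of degree exactly $n$, of which there are $\binom{n+s-1}{s-1}$. Since $\ida$ is monomial, the subspace cut out by $\n^{n+1}+(\n^n\cap\ida)$ is spanned precisely by the residues of those degree-$n$ monomials that already belong to $\ida$. Hence
$$\dim_\C\frac{\m^n}{\m^{n+1}}=\#\{\text{degree-}n\text{ monomials}\}-\#\{\text{degree-}n\text{ monomials in }\ida\},$$
so the task becomes bounding from below the number of degree-$n$ monomials that do \emph{not} lie in $\ida$.

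The third step invokes the hypothesis. By assumption every monomial of $\ida$ is a multiple of $x_i$, so no monomial free of $x_i$ can lie in $\ida$. Consequently all degree-$n$ monomials in the $s-1$ variables $\{x_1,\ldots,x_s\}\setminus\{x_i\}$ survive in the quotient and remain linearly independent there; they alone already account for $\binom{n+(s-1)-1}{(s-1)-1}=\binom{n+s-2}{s-2}$ classes, which gives the inequality (the bound is not an equality in general, since degree-$n$ multiples of $x_i$ outside $\ida$ may also survive). I expect no serious obstacle: the only points requiring care are the purely formal quotient identification in the first step and the observation that, for a monomial ideal, the relevant subspace of $\n^n/\n^{n+1}$ is itself spanned by monomials. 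Both are routine once the monomial basis of $\n^n/\n^{n+1}$ is fixed, and the genuine content of the lemma lies entirely in the final, elementary count.
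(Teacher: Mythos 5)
Your proposal is correct and takes essentially the same route as the paper: both arguments reduce the lemma to the identity $\dim_{\C}\m^n/\m^{n+1}=\#\{\text{degree-}n\text{ monomials not in }\ida\}$, which holds precisely because $\ida$ is a monomial ideal, and then conclude by an elementary count. The only differences are cosmetic: you obtain that identity via the second isomorphism theorem and the modular law, where the paper argues directly with generators and a linear-independence check, and you exhibit the $\binom{n+s-2}{s-2}$ surviving monomials free of $x_i$ directly, whereas the paper subtracts an upper bound for the number of degree-$n$ monomials divisible by $x_i$ (your version also avoids the paper's unnecessary case split on $n$ relative to the minimal degree of $\ida$).
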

\begin{proof}
Let $l=\min\{\mbox{total degree of monomials in }\ida\}$.
For every $n<l$,
$$\dim_{\C}\frac{\m^n}{\m^{n+1}}=\binom{n+s-1}{s-1}>\binom{n+s-2}{s-2},$$
so the lemma is true for these values of $n$. Now we consider $n=l+j$, $j\geq0$.
Let $L_j:=|\{x_1^{\alpha_1}\cdots x_s^{\alpha_s}\in\ida|\sum\alpha_i=l+j\}|$, where
$j\geq0$.
We claim that
\begin{align}\label{e. dim m^n/m^n+1}
\dim_{\C}\frac{\m^{l+j}}{\m^{l+j+1}}=\binom{l+j+s-1}{s-1}-L_j.
\end{align}
To prove (\ref{e. dim m^n/m^n+1}) we first observe that the ideal $\m^{l+j}$ is generated by the (classes of)
the elements of the set
$$B:=\{x_1^{\alpha_1}\cdots x_s^{\alpha_s}\notin\ida|\sum_i\alpha_i=l+j\}.$$
This set has cardinality $\binom{l+j+s-1}{s-1}-L_j$. To show that (the image of) this set is linearly
independent in $\m^{l+j}/\m^{l+j+1}$, we observe that if there were a non-trivial linear combination
of elements of $B$ equal to zero, then we would have
$\sum_{x^{\alpha}\in B}c_{\alpha}x^{\alpha}-\sum_{|\beta|=l+j+1}g_{\beta}x^{\beta}\in\ida$,
for some $c_{\alpha}\in\C$, $g_{\beta}\in\Po$, and not all of $c_{\alpha}$ equal to 0.
Thus, for some $\alpha$, $x^{\alpha}\in\ida$, since $\ida$ is a monomial ideal. This is a contradiction.
Therefore $B$ is linearly independent.
\\
\\
According to the hypothesis on $\ida$ we can assume that the variable $x_1$ appears in every
monomial of $\ida$. The set of monomials $x_1^{\alpha_1}\cdots x_s^{\alpha_s}$
of total degree $l+j$ such that $\alpha_1>0$ has cardinality $\binom{l+j+s-2}{s-1}$.
Then (\ref{e. dim m^n/m^n+1}) concludes the proof of the lemma for these values of $n$ since
$$\binom{l+j+s-1}{s-1}-L_j\geq\binom{l+j+s-1}{s-1}-\binom{l+j+s-2}{s-1}=\binom{l+j+s-2}{s-2}.$$
\end{proof}

\begin{rem}
The previous lemma is no longer valid if the hypothesis that all monomials in $\ida$ contain one
same variable $x_i$ is removed, at least for non-reduced ideals. Let $\ida=\langle x^2,y^2 \rangle$.
Then, for $n\geq3$, $\dim_{\C}\langle x,y \rangle^n/\langle x,y \rangle^{n+1}=0$.
\end{rem}

\begin{coro}\label{c. good dim iff non sing hypersurf}
Let $X=\V(F)\subset\C^s$, where $F\in\Po$. Assume that $0\in X$.
Let $\m=\langle x_1,\ldots,x_s\rangle\subset\Po/\langle F \rangle$.
Then
$$\dim_{\C}\frac{\m^n}{\m^{n+1}}\geq\binom{n+s-2}{s-2}.$$
\end{coro}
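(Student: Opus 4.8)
The plan is to reduce the statement to the monomial case already settled in Lemma \ref{l. good dim monomial ideal}, by passing first to the tangent cone of $X$ at $0$ and then to an initial monomial ideal, performing both reductions so that the numbers $\dim_{\C}\m^n/\m^{n+1}$ are left unchanged.

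First I would set $R=\Po/\langle F\rangle$ and observe that the graded ring $\bigoplus_n \m^n/\m^{n+1}$ is the associated graded ring $\mathrm{gr}_{\m}(R)$, whose degree-$n$ component is exactly $\m^n/\m^{n+1}$. Writing $F=F_{\mathrm{in}}+(\text{higher-degree terms})$, with $F_{\mathrm{in}}$ the initial form of $F$, i.e. its nonzero homogeneous component of least degree $l$, the standard description of the associated graded ring yields an isomorphism of graded rings $\mathrm{gr}_{\m}(R)\cong\Po/I^{*}$, where $I^{*}\subset\Po$ is the ideal generated by the initial forms of all elements of $\langle F\rangle$. The key point is that $\Po$ is an integral domain, so initial forms are multiplicative: the initial form of $gF$ is the product of the initial form of $g$ with $F_{\mathrm{in}}$. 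Hence $I^{*}=\langle F_{\mathrm{in}}\rangle$, and, matching degrees, $\dim_{\C}\m^n/\m^{n+1}=\dim_{\C}(\Po/\langle F_{\mathrm{in}}\rangle)_n$. Since $0\in X$ forces $F(0)=0$, we have $l\geq 1$, which is precisely what makes the final estimate nontrivial.

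Next I would fix any monomial order on $\Po$ and pass to the initial ideal of the principal ideal $\langle F_{\mathrm{in}}\rangle$. Because a single generator is automatically a Gr\"obner basis of the principal ideal it generates (leading terms multiply in the domain $\Po$), the initial ideal is the monomial ideal $\ida:=\langle\,\mathrm{LT}(F_{\mathrm{in}})\,\rangle$, generated by a single monomial of degree $l\geq 1$. As $\langle F_{\mathrm{in}}\rangle$ is homogeneous, passing to its initial ideal preserves the Hilbert function, so $\dim_{\C}(\Po/\langle F_{\mathrm{in}}\rangle)_n=\dim_{\C}(\Po/\ida)_n$, which in turn equals $\dim_{\C}\m^n/\m^{n+1}$ computed now in $\Po/\ida$.

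Finally, the monomial $\mathrm{LT}(F_{\mathrm{in}})$ has degree $l\geq 1$, so it is a multiple of some variable $x_i$; thus every monomial of $\ida$ is a multiple of that same $x_i$ and $\dim V(\ida)=s-1$. The hypotheses of Lemma \ref{l. good dim monomial ideal} are therefore satisfied, and applying it gives $\dim_{\C}\m^n/\m^{n+1}\geq\binom{n+s-2}{s-2}$, as desired. I expect the main obstacle to be the correct bookkeeping of the two reductions, namely verifying that both the passage to the tangent cone and the passage to the initial monomial ideal leave $\dim_{\C}\m^n/\m^{n+1}$ unchanged; the essential algebraic input in each case is that $\Po$ is an integral domain, which makes initial and leading forms multiplicative and forces the single generator to be its own Gr\"obner basis.
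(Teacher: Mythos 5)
Your proof is correct and follows essentially the same route as the paper: reduce to the monomial case by passing from $\langle F\rangle$ to the initial form $F_{\mathrm{in}}$ (tangent cone) and then to the initial monomial ideal $\langle\mathrm{LT}(F_{\mathrm{in}})\rangle$, both steps preserving $\dim_{\C}\m^n/\m^{n+1}$, and finally apply Lemma \ref{l. good dim monomial ideal}. The only difference is that you prove the two Hilbert-function-preserving reductions directly (via multiplicativity of initial/leading forms in the domain $\Po$ and Macaulay's theorem), whereas the paper simply cites them from Eisenbud (Theorem 15.26 and Section 15.10.3).
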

\begin{proof}
For this proof, let $\C[x]=\Po$. Consider any ideal $I\subset\C[x]$ and let
$\m=\langle x_1,\ldots,x_s \rangle\subset\C[x]/I$. Let $H_{\C[x]/I}(n):=\dim_{\C}\m^n/\m^{n+1}$
be the Hilbert function of $\C[x]/I$. Now denote by $F_0$ the homogeneous component of $F$ of
lowest degree. Let $>$ be any monomial order on $\C[x]$.
It is known that the Hilbert functions of $\C[x]/\langle F \rangle$, $\C[x]/\langle F_0 \rangle$, and that
of $\C[x]/\langle in_>(F_0) \rangle$ coincide (see \cite{E}, Theorem 15.26 and Section 15.10.3). Since
$\langle in_>(F_0) \rangle$ is an ideal generated by a single monomial, we obtain the desired conclusion
using lemma \ref{l. good dim monomial ideal}.
\end{proof}

\begin{coro}\label{c. non-sing iff good dim at order n}
Let $X=\V(F)\subset\C^s$, where $F\in\Po$. Let $p\in X$ and let $\m_p$ be its corresponding maximal ideal in
$\Ox_{X,p}$. Then $p$ is non-singular if and only if $\dim_{\C}\m_p/\m_p^{n+1}=\binom{n+s-1}{s-1}-1$.
\end{coro}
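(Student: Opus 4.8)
The plan is to prove that the quantity $\dim_{\C}\m_p/\m_p^{n+1}$ is \emph{always} bounded below by $\binom{n+s-1}{s-1}-1$, with equality occurring exactly at the non-singular points, and then to extract both implications from this single inequality. First I would reduce to the situation of Corollary \ref{c. good dim iff non sing hypersurf}: after an affine translation I may assume $p=0$, which alters neither the singularity of $p$ nor the isomorphism type of the local ring. Since $\Ox_{X,p}$ is the localization of $A:=\Po/\langle F\rangle$ at the maximal ideal $\m=\langle x_1,\ldots,x_s\rangle$, Lemma \ref{l. natural isom} identifies $\m^k/\m^{k+1}$ with $\m_p^k/\m_p^{k+1}$ as $\C$-vector spaces for every $k$ (apply the lemma to the filtered isomorphism $\m/\m^{N+1}\cong\m_p/\m_p^{N+1}$ and take successive quotients, or compute each graded piece as a difference $\dim_{\C}\m/\m^{k+1}-\dim_{\C}\m/\m^{k}$). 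Thus all graded dimensions may be computed on $A$, where Corollary \ref{c. good dim iff non sing hypersurf} applies.

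Next I would assemble the global lower bound by telescoping the filtration:
\[
\dim_{\C}\frac{\m_p}{\m_p^{n+1}}=\sum_{k=1}^{n}\dim_{\C}\frac{\m_p^{k}}{\m_p^{k+1}}\geq\sum_{k=1}^{n}\binom{k+s-2}{s-2}=\binom{n+s-1}{s-1}-1,
\]
where each inequality $\dim_{\C}\m_p^{k}/\m_p^{k+1}\geq\binom{k+s-2}{s-2}$ is Corollary \ref{c. good dim iff non sing hypersurf} applied at level $k$, and the final equality is the hockey-stick identity $\sum_{k=0}^{n}\binom{k+s-2}{s-2}=\binom{n+s-1}{s-1}$ (the $k=0$ term contributing the subtracted $1$). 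The structural point I want to stress is that these are \emph{term-by-term} lower bounds, so the total attains the value $\binom{n+s-1}{s-1}-1$ if and only if every graded piece $\m_p^{k}/\m_p^{k+1}$ with $1\leq k\leq n$ has the minimal dimension $\binom{k+s-2}{s-2}$.

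From this, both directions follow quickly. For the reverse implication, if $\dim_{\C}\m_p/\m_p^{n+1}=\binom{n+s-1}{s-1}-1$, then in particular the $k=1$ term is forced to its minimum, i.e. $\dim_{\C}\m_p/\m_p^{2}=\binom{s-1}{s-2}=s-1$; since the Zariski tangent space then has dimension equal to $\dim X=s-1$, the point $p$ is non-singular. For the forward implication, if $p$ is non-singular then $\Ox_{X,p}$ is a regular local ring of dimension $s-1$ with residue field $\C$, so its associated graded ring is the polynomial ring $\C[t_1,\ldots,t_{s-1}]$; this yields $\dim_{\C}\m_p^{k}/\m_p^{k+1}=\binom{k+s-2}{s-2}$ for every $k$, and summing returns exactly $\binom{n+s-1}{s-1}-1$.

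I expect the only delicate bookkeeping to be the passage from the single-level estimate of Corollary \ref{c. good dim iff non sing hypersurf} to a statement about each graded piece of the \emph{local} ring, which is precisely what the localization isomorphism of Lemma \ref{l. natural isom} supplies; once the graded dimensions are transported to $A$, the remaining content is the hockey-stick summation together with the standard fact that equality in the bottom graded piece, $\dim_{\C}\m_p/\m_p^{2}=s-1=\dim X$, is exactly the criterion of non-singularity.
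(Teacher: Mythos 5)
Your proof is correct and takes essentially the same route as the paper: both reduce to the affine coordinate ring via Lemma \ref{l. natural isom}, bound each graded piece $\m^k/\m^{k+1}$ from below using Corollary \ref{c. good dim iff non sing hypersurf}, and use the symmetric-algebra (polynomial) structure of the associated graded ring at a regular point for the forward direction. Your telescoping sum with its equality-forces-minimality analysis is just the unrolled form of the paper's induction on $n$ through the exact sequence $0\to\m^n/\m^{n+1}\to\m/\m^{n+1}\to\m/\m^n\to0$, with the reverse implication being the contrapositive of the paper's ``singular implies strict inequality'' step.
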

\begin{proof}
By lemma \ref{l. natural isom}, it is enough to prove the statement for $\m/\m^{n+1}$, where $\m$ is the
maximal ideal corresponding to $p$ in $\Po/\langle F\rangle$.
We proceed by induction on $n$. Let $n=1$. If $p$ is non-singular,
$\dim_{\C}(\m/\m^2)=\dim X=s-1=\binom{s}{s-1}-1$.
Now consider the exact sequence of $\C-$vector spaces:
\begin{equation}\label{e. exact sequence}
0\rightarrow\frac{\m^n}{\m^{n+1}}
\rightarrow \frac{\m}{\m^{n+1}}\rightarrow\frac{\m}{\m^n}\rightarrow0.
\end{equation}
Since $p$ is non-singular, $\textbf{S}^n(\m/\m^2)=\m^n/\m^{n+1}$, where $\textbf{S}^n(\cdot)$
denotes the $n$th-symmetric product. Thus $\dim_{\C}(\m^n/\m^{n+1})=\binom{n+s-2}{s-2}$. On the other hand,
by induction, $\dim_{\C}(\m/\m^n)=\binom{n+s-2}{s-1}-1$. By exactness of the sequence, we conclude that
$$\dim_{\C}\Big(\frac{\m}{\m^{n+1}}\Big)=\binom{n+s-2}{s-2}+\binom{n+s-2}{s-1}-1=\binom{n+s-1}{s-1}-1.$$
Now suppose that $p\in X$ is singular. In particular, $\dim_{\C}\m/\m^2>\dim X=s-1$. Using corollary
\ref{c. good dim iff non sing hypersurf} and the exact sequence (\ref{e. exact sequence}),
we conclude by induction that $\dim_{\C}\m/\m^{n+1}>\binom{n+s-1}{s-1}-1$.
\end{proof}


\section{Computing $T^n_pX$}

Let $F\in\Po$, $R=\Po/\langle F \rangle$ and $X=\V(F)$.
Let $p\in X$ and $(R_p,\m_p)$ be the localization of $R$ at $p$.
It is well known that the tangent space at $p$ has the following description:
$$T_pX=(\m_p/\m_p^{2})^{\vee}=\ker\Jc(F)_{|p},$$
where $\Jc(F)$ is the Jacobian matrix of $F$. The goal of this section is to give an analogous
description of the $R_p/\m_p\cong\C-$vector space
$$T^n_pX=(\m_p/\m_p^{n+1})^{\vee},$$
for non-singular points using the higher-order Jacobian. As in previous sections, for $n\in\N$,
let $N=\binom{n+s}{s}$ and $M=\binom{n+s-1}{s}$.

\begin{rem}
Notice that for non-singular points of a hypersurface
$p\in X\subset\C^s$, $T_pX$ is a hyperplane in $\C^s$. However, for $n>1$ the space
$T^n_pX\hookrightarrow T^n_p\C^s\cong\C^{N-1}$ is not a hyperplane of $\C^{N-1}$
(see corollary \ref{c. non-sing iff good dim at order n}).
\end{rem}

\begin{lem}\label{l. TnX vs dual}
Let $p\in X\subset\C^s$. Fix $n\in\N$ and let $\Jcn$ be the higher-order
Jacobian matrix of $F$. Then we have the following identification:
$$T^n_pX\cong\Big(\frac{\C^{N-1}}{\Img(\Jc_n(F)^t_{|p})}\Big)^{\vee}.$$
\end{lem}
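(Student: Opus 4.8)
The plan is to realize all three vector spaces through the single linear map $\theta$ of (\ref{e. theta}) and then dualize. First I would use Lemma \ref{l. natural isom} to replace the local object $\m_p/\m_p^{n+1}$ by the ``global'' one $\m/\m^{n+1}$, where $\m$ is the maximal ideal of $R=\Po/\bb$ corresponding to $p$ and $\bb=\langle F\rangle$; since dualizing preserves isomorphisms, this reduces the claim to exhibiting an isomorphism $\m/\m^{n+1}\cong\C^{N-1}/\Img(\Jcn^t_{|p})$.

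Next I would describe $\m/\m^{n+1}$ purely in terms of $\ap\subset\Po$. Because $F(p)=0$ we have $\bb\subset\ap$, and the quotient map $\Po\to R$ restricts to a surjection $\ap\twoheadrightarrow\m$ with kernel $\bb$; passing to the order-$(n+1)$ quotient gives a canonical isomorphism
$$\frac{\m}{\m^{n+1}}\cong\frac{\ap}{\ap^{n+1}+\bb}.$$
So everything is now happening inside $\ap/\ap^{n+1}$.

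The crucial step is to analyze $\theta$ on $\ap$. I would first check the containment $\ap^{n+1}\subseteq\ker\theta$: any element of $\ap^{n+1}$ is a sum of products of $n+1$ factors lying in $\ap$, and applying $\partial^{\alpha}$ with $|\alpha|\leq n$ via the Leibniz rule (\ref{e. general Leibniz rule}) always leaves at least one undifferentiated factor, which vanishes at $p$. Then I would compute $\theta$ on the monomial basis $\{(x-p)^{\alpha}:=\prod_i(x_i-a_i)^{\alpha_i}\mid 1\leq|\alpha|\leq n\}$ of $\ap/\ap^{n+1}$: a direct differentiation shows $\frac{\partial^{\gamma}(x-p)^{\alpha}}{\gamma!}|_p=\delta_{\alpha\gamma}$, so $\theta$ sends this basis to the standard basis of $\C^{N-1}$. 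Since $\dim_{\C}\ap/\ap^{n+1}=N-1$, this proves that $\theta$ is surjective with $\ker\theta=\ap^{n+1}$, i.e. $\theta$ induces an isomorphism $\ap/\ap^{n+1}\cong\C^{N-1}$.

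Finally I would transport $\bb$ across this isomorphism. Because $\theta(\ap^{n+1})=0$, the image of $\ap^{n+1}+\bb$ is just $\theta(\bb)$, and Proposition \ref{p. higher-order jacobian}(b) identifies $\theta(\bb)=\Img(\Jcn^t_{|p})$. Hence the induced map gives
$$\frac{\ap}{\ap^{n+1}+\bb}\cong\frac{\C^{N-1}}{\Img(\Jcn^t_{|p})},$$
and combining this with the two isomorphisms above and taking $\C$-linear duals yields the statement. The only place demanding care is the verification that $\ker\theta$ equals $\ap^{n+1}$ exactly, and not something larger; I expect the cleanest route is the explicit computation $\theta((x-p)^{\alpha})=e_{\alpha}$ above, which simultaneously gives surjectivity and, together with the dimension count, pins down the kernel precisely, thereby sidestepping any delicate Taylor-remainder estimates.
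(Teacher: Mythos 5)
Your proof is correct and follows essentially the same route as the paper: both reduce to the global quotient via Lemma \ref{l. natural isom}, identify $\ap/\ap^{n+1}\cong\C^{N-1}$ through the map $\theta$ (with the monomials $(x-p)^{\alpha}$ hitting the canonical basis), transport $\bb$ to $\Img(\Jc_n(F)^t_{|p})$ using Proposition \ref{p. higher-order jacobian}(b), and then dualize. The only cosmetic difference is how $\ker\theta=\ap^{n+1}$ is pinned down: the paper reads it off directly from the (finite, exact) Taylor expansion of a polynomial around $p$, while you prove the containment $\ap^{n+1}\subseteq\ker\theta$ via Leibniz and close the gap with a dimension count --- a count which itself rests on the same Taylor-expansion fact that the monomials span $\ap/\ap^{n+1}$, so nothing is really sidestepped.
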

\begin{proof}
The proof consists in adapting the usual proof for the case $n=1$ to the case $n>1$
(see for instance \cite{H}, Chapter I, Theorem 5.1). Let $p=(a_1,\ldots,a_s)$ and
$\ida_p=\langle x_1-a_1,\ldots,x_s-a_s\rangle\subset\Po$.
As in section \ref{s. higher-order jacobian}, consider the following linear map:
\begin{align}
\theta:\ap&\rightarrow\C^{N-1}\notag\\
f&\mapsto\Big(\frac{\partial^{\alpha}f}{\alpha!}|1\leq|\alpha|\leq n\Big)_{|p}.\notag
\end{align}
This map is surjective since $(x-a)^{\alpha}$, for $1\leq|\alpha|\leq n$, are mapped to
the canonical basis of $\C^{N-1}$. On the other hand, by observing the Taylor expansion of
an element of $\ap$ around $p$ we see that $\ker\theta=\ap^{n+1}$. Thus,
\begin{equation}\label{e. isom all n}
\frac{\ap}{\ap^{n+1}}\cong\C^{N-1}.
\end{equation}
Let $\bb=\langle F \rangle$. According to $(b)$ of proposition \ref{p. higher-order jacobian},
$\theta(\bb)=\Img(\Jc_n(F)^t_{|p})$. Using the isomorphism (\ref{e. isom all n}) we also have
$\theta(\bb)\cong\bb(\ap/\ap^{n+1})=(\bb+\ap^{n+1})/\ap^{n+1}$. Then
$$\frac{\frac{\ap}{\ap^{n+1}}}{\frac{\bb+\ap^{n+1}}{\ap^{n+1}}}\cong\frac{\ap}{\bb+\ap^{n+1}}.$$
Since $\bb\subset\ap$, it follows that
$$\frac{\ap\Big(\frac{\Po}{\bb}\Big)}{\ap^{n+1}\Big(\frac{\Po}{\bb}\Big)}
\cong\frac{\frac{\ap+\bb}{\bb}}{\frac{\ap^{n+1}+\bb}{\bb}}=\frac{\frac{\ap}{\bb}}{\frac{\ap^{n+1}+\bb}{\bb}}
\cong\frac{\ap}{\ap^{n+1}+\bb}.$$
By identifying (see lemma \ref{l. natural isom}) $\frac{\m_p}{\m_p^{n+1}}\cong\frac{\ap\Big(\frac{\Po}{\bb}\Big)}{\ap^{n+1}\Big(\frac{\Po}{\bb}\Big)},$
we conclude
$$T^n_pX=\Big(\frac{\m_p}{\m_p^{n+1}}\Big)^{\vee}\cong
\Big(\frac{\frac{\ap}{\ap^{n+1}}}{\frac{\bb+\ap^{n+1}}{\ap^{n+1}}}\Big)^{\vee}
\cong\Big(\frac{\C^{N-1}}{\Img(\Jc_n(F)^t_{|p})}\Big)^{\vee}.$$
\end{proof}

Now assume that $p\in X$ is a non-singular point. We claim that
\begin{equation}\label{e. ker equals dual}
\Big(\frac{\C^{N-1}}{\Img(\Jc_n(F)^t_{|p})}\Big)^{\vee}\cong\ker\Jc_n(F)_{|p}.
\end{equation}
To prove this, let $e_i^{\vee}:\C^{N-1}\rightarrow\C$, $t=(t_1,\ldots,t_{N-1})\mapsto t_i$. Then
\begin{align}\label{e. ker in dual}
\ker\Jc_n(F)_{|p}\hookrightarrow\{\phi:\C^{N-1}\rightarrow\C|\Img(\Jc_n(F)^t_{|p})\subset\ker\phi\}
\end{align}
via the map $t\mapsto\sum_it_ie_i^{\vee}$. Since $p$ is a non-singular point,
theorem \ref{t. higher order jacobian} implies
$\dim\ker\Jc_n(F)_{|p}=N-M-1.$ Now, since $(\C^{N-1}/\Img(\Jc_n(F)^t_{|p}))^{\vee}\cong
\{\phi:\C^{N-1}\rightarrow\C|\Img(\Jc_n(F)^t_{|p})\subset\ker\phi\}$,
corollary \ref{c. non-sing iff good dim at order n} and lemma \ref{l. TnX vs dual} imply:
$\dim\{\phi:\C^{N-1}\rightarrow\Img(\Jc_n(F)^t_{|p})\subset\ker\phi\}
=\dim_{\C}(\m_p/\m_p^{n+1})^{\vee}=N-M-1.$
Therefore the inclusion (\ref{e. ker in dual}) is actually an equality. This proves claim
(\ref{e. ker equals dual}). Using lemma \ref{l. TnX vs dual} we conclude:

\begin{pro}
Let $F\in\Po$ and $p\in X=\V(F)\subset\C^s$ be a non-singular point. Then
$T^n_pX=\ker\Jc_n(F)_{|p}.$
\end{pro}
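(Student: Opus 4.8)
The plan is to deduce the equality directly from Lemma \ref{l. TnX vs dual}, which already identifies $T^n_pX$ with the dual $(\C^{N-1}/\Img(\Jc_n(F)^t_{|p}))^{\vee}$; so it remains only to identify this dual with $\ker\Jc_n(F)_{|p}$ when $p$ is non-singular. First I would produce the natural map in the expected direction: to a vector $t=(t_1,\ldots,t_{N-1})\in\ker\Jc_n(F)_{|p}$ associate the functional $\phi_t=\sum_i t_ie_i^{\vee}$ on $\C^{N-1}$, where $e_i^{\vee}$ is the $i$th coordinate functional. The key observation is that $\phi_t$ automatically kills $\Img(\Jc_n(F)^t_{|p})$: evaluating $\phi_t$ on $\Jc_n(F)^t_{|p}v$ gives $t^{\mathsf T}\Jc_n(F)^t_{|p}v=(\Jc_n(F)_{|p}t)^{\mathsf T}v=0$, using precisely that $t$ lies in the kernel. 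Hence $\phi_t$ descends to the quotient and defines an element of $(\C^{N-1}/\Img(\Jc_n(F)^t_{|p}))^{\vee}$, and the assignment $t\mapsto\phi_t$ is visibly injective since $\phi_t=0$ forces all $t_i=0$.

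The second step is a dimension count that forces this injection to be an isomorphism. On the kernel side, Theorem \ref{t. higher order jacobian} gives $\rk\Jc_n(F)_{|p}=M$ at a non-singular point, so rank--nullity applied to the $M\times(N-1)$ matrix $\Jc_n(F)_{|p}$ yields $\dim\ker\Jc_n(F)_{|p}=(N-1)-M=N-M-1$. On the dual side, combining Lemma \ref{l. TnX vs dual} with Corollary \ref{c. non-sing iff good dim at order n} gives $\dim(\C^{N-1}/\Img(\Jc_n(F)^t_{|p}))^{\vee}=\dim_{\C}\m_p/\m_p^{n+1}=\binom{n+s-1}{s-1}-1$, and Pascal's identity $\binom{n+s}{s}-\binom{n+s-1}{s}=\binom{n+s-1}{s-1}$ rewrites this as $N-M-1$ as well. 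Since an injective linear map between spaces of equal finite dimension is an isomorphism, $t\mapsto\phi_t$ is an isomorphism, and chaining it with Lemma \ref{l. TnX vs dual} gives $T^n_pX\cong\ker\Jc_n(F)_{|p}$.

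I do not expect a genuine obstacle here, since the real work sits in the preceding results: Lemma \ref{l. TnX vs dual} already carries out the delicate identification of $T^n_pX$ with a cokernel, and Theorem \ref{t. higher order jacobian} supplies the rank. The only point requiring care is upgrading the abstract isomorphism to the asserted equality of subspaces of $T^n_p\C^s\cong\C^{N-1}$. This is handled by recalling that the inclusion $T^n_pX\hookrightarrow T^n_p\C^s$ is dual to the surjection $\ap/\ap^{n+1}\twoheadrightarrow\m_p/\m_p^{n+1}$, so that $T^n_pX$ is exactly the space of functionals on $\ap/\ap^{n+1}\cong\C^{N-1}$ annihilating $\theta(\bb)=\Img(\Jc_n(F)^t_{|p})$; under the coordinate identification $t\mapsto\sum_i t_ie_i^{\vee}$ these are precisely the functionals arising from $\ker\Jc_n(F)_{|p}$, so the equality is literal rather than merely up to isomorphism.
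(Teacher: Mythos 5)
Your proof is correct and follows essentially the same route as the paper's: the paper likewise embeds $\ker\Jc_n(F)_{|p}$ into the annihilator of $\Img(\Jc_n(F)^t_{|p})$ via $t\mapsto\sum_i t_ie_i^{\vee}$ and then forces equality by the same dimension count, using Theorem \ref{t. higher order jacobian} for the kernel side and Corollary \ref{c. non-sing iff good dim at order n} together with Lemma \ref{l. TnX vs dual} for the dual side. Your explicit check that $\phi_t$ kills the image, the Pascal identity computation, and the closing remark upgrading the isomorphism to a literal equality of subspaces only spell out details the paper leaves implicit.
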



\section{Some applications}

In this final section we will give some applications of the constructions and results of previous
sections. Firstly, we will generalize a result of O'Shea appearing in \cite{Sh} that computes limits
of tangent spaces to singular points of a hypersurface. Secondly, applying some results of Villamayor
appearing in \cite{V}, we will describe an ideal whose blow up is the higher
Nash blowup of a hypersurface. Using this ideal, we will prove a higher-order analogue of Nobile's
theorem for normal hypersurfaces.


\subsection{Limits of $T^n_pX$, where $X$ is a hypersurface}

We start by revisiting a theorem due to D. O'Shea appearing in \cite{Sh} which gives a method to
compute limits of tangent spaces to a singular point of a hypersurface. We will see that this result
is still valid if we replace tangent space by $T_p^nX$, for any $n\in\N$, essentially with the same proof.
This theorem will allow us to compute the space of limits of $T_p^nX$ using the theory of Gr\"obner bases.
In particular, this method provides a way to compute the fibers of the higher Nash blowup of a hypersurface.

\begin{defi}
Let $X=\V(F)\subset\C^s$ where $F\in\Po$, and let $S(X)$ denotes the singular locus of $X$.
Assume that $0\in X$. The space of limits of $T_p^nX$ at $0$ is the set
$$\{T\in Gr(N-M-1,\C^{N-1})|\exists\{p_k\}\subset X\setminus S(X) \mbox{ s.t. }
p_k\rightarrow 0\mbox{ and }T_{p_k}^nX\rightarrow T\},$$
where $Gr(N-M-1,\C^{N-1})$ denotes the Grassmanian of vector spaces of dimension $N-M-1$ in $\C^{N-1}$.
We denote the space of limits of $T_p^nX$ as $\Li$. By using Pl\"ucker coordinates, we embed
$Gr(N-M-1,\C^{N-1})$ in a projective space so, when we mention the space $T_p^nX$ or a limit of such,
we consider them as points in such a projective space.
\end{defi}

\begin{rem}
We will use the duality between $Gr(N-M-1,\C^{N-1})$ and $Gr(M,\C^{N-1})$ to compute $\Li$.
More precisely, in the next theorem we will compute limits of vector spaces of dimension $M$
defined as the span of the rows of $\Jcn_{|p_k}$, where $\{p_k\}\subset X$ is a sequence of non-singular
points converging to 0 (recall that $\rk(\Jcn_{|p_k})=M$ in this case).
By duality, we will obtain the set $\Li$.
\end{rem}

Let $\Lambda=\{(\alpha_1,\ldots,\alpha_M)|\alpha_i\in\N^s,\mbox{ }
1\leq|\alpha|\leq n,\mbox{ }\alpha_1<_{grlex}\ldots<_{grlex}\alpha_M\}$.
For $J\in\Lambda$, denote by $\Delta_J$ the determinant of the matrix formed by the $M$
columns of $\Jcn$ corresponding to $J$.

\begin{teo}\label{t. L(V,0) = V(A)}(cf. \cite{Sh}, Proposition 1)
Let $X=\V(F)\subset \C^s$ be a hypersurface, where $F\in\C[x_1,\ldots,x_s]$ and
assume that $0\in X$ is a singular point.
Consider the following ideal in $\C[x_1,\ldots,x_s,t,u_J|J\in\Lambda]$:
$$A=\langle F,u_J-t\Delta_J|J\in\Lambda\rangle.$$
Then $\Li$ can be identified with the variety
$$\V\Big(\frac{A\cap\C[x_1,\ldots,x_s,u_J|J\in\Lambda]}{\langle x_1,\ldots,x_s\rangle}\Big).$$
\end{teo}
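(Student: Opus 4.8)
The plan is to realize $\Li$ as the image of an elimination-theoretic construction, following the strategy of O'Shea for tangent spaces. The key idea is that the higher-order Jacobian row span gives, at each non-singular point $p_k$, a point in $Gr(M,\C^{N-1})$ whose Pl\"ucker coordinates are the maximal minors $\Delta_J(p_k)$ (this uses Proposition~\ref{p. higher-order jacobian}(c) and Theorem~\ref{t. higher order jacobian}, which guarantee $\rk \Jcn_{|p_k}=M$). By the duality between $Gr(N-M-1,\C^{N-1})$ and $Gr(M,\C^{N-1})$, computing the limits of these row spans computes $\Li$. So the essential task is to describe, algebraically, the closure of the set $\{(\Delta_J(p))_{J\in\Lambda}\mid p\in X\setminus S(X)\}$ as $p\to 0$ in projective Pl\"ucker space, and then intersect with the fiber over $0$.

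First I would set up the graph: consider the map sending a non-singular $p\in X$ to the point $(p,(\Delta_J(p))_J)$ in projective coordinates. The auxiliary variable $t$ in the ideal $A=\langle F, u_J - t\Delta_J\mid J\in\Lambda\rangle$ serves to projectivize, so that $u_J = t\Delta_J$ parametrizes the line through the origin spanned by the vector of minors; taking the Zariski closure of the graph and eliminating $t$ via $A\cap\C[x_1,\ldots,x_s,u_J]$ yields the closure of the graph of $p\mapsto [(\Delta_J(p))_J]$ inside $X\times\mathbb{P}$. The projective coordinates $u_J$ are exactly the Pl\"ucker coordinates of the row span of $\Jcn_{|p}$, so a point of this variety lying over $0$ records precisely a limit of row spans, hence (by duality) a limit of $T^n_pX$.

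Next I would identify the fiber over $0$. Restricting the eliminated ideal to the fiber $\{x_1=\cdots=x_s=0\}$—which is what dividing by $\langle x_1,\ldots,x_s\rangle$ encodes in the statement—gives exactly the set of Pl\"ucker points $[(u_J)]$ that arise as limits along sequences $p_k\to 0$ of non-singular points. The two inclusions to verify are: (i) every genuine limit $T=\lim T^n_{p_k}X$ produces a point in this fiber, which follows because the minors $\Delta_J$ are continuous and the projective limit of the row-span Pl\"ucker vectors lies in the closure of the graph; and (ii) conversely, every point of the fiber comes from such a sequence, which holds because the variety is the Zariski closure of the graph over $X\setminus S(X)$, and over $\C$ the Zariski closure of a constructible set agrees with its Euclidean closure (so points of the fiber are classical limits). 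Throughout, I would appeal to Theorem~\ref{t. higher order jacobian} to ensure that the Pl\"ucker vector $(\Delta_J(p))_J$ is genuinely nonzero at non-singular points, so the projective point is well-defined.

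The main obstacle I anticipate is the careful bookkeeping of the duality step together with the passage from Zariski to Euclidean closure. The definition of $\Li$ uses analytic convergence $T^n_{p_k}X\to T$, whereas the ideal-theoretic elimination produces a Zariski-closed set; reconciling these requires the standard fact that for complex varieties the closure of the graph in the Zariski and analytic topologies coincide, and then transporting this through the Grassmann duality $Gr(M,\C^{N-1})\cong Gr(N-M-1,\C^{N-1})$ without losing track of which Pl\"ucker coordinates correspond to $T$ versus to its annihilator. I expect the algebraic manipulation of the ideal $A$ (projectivization via $t$ and elimination) to be routine once the correspondence $u_J\leftrightarrow \Delta_J$ is fixed; the conceptual care lies in justifying that limits of the dual spaces are computed correctly by this elimination. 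I would mirror O'Shea's argument in \cite{Sh} for the $n=1$ case, noting that the only change is replacing the ordinary Jacobian minors by the higher-order minors $\Delta_J$, so the proof structure transfers essentially verbatim.
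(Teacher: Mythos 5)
Your proposal is correct and follows essentially the same route as the paper: the scaling variable $t$, elimination of $t$ via the closure theorem, the identification of Zariski with Euclidean closure of the projected variety, and the Grassmannian duality identifying limits of the row spans of $\Jcn$ (whose Pl\"ucker coordinates are the $\Delta_J$, nonzero at non-singular points by Theorem \ref{t. higher order jacobian}) with limits of the spaces $T^n_pX$ are exactly the ingredients of the paper's argument. The only difference is organizational: the paper splits the analysis into claims over non-singular and singular base points (using the extension theorem of elimination theory for the former, and the "all $\Delta_J$ vanish at singular points, so approximating sequences are eventually non-singular" argument for the latter), whereas you phrase the same content as two inclusions about the fiber over the origin.
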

\begin{proof}
Make $(x,u)=((x_1,\ldots,x_s),(u_J|J\in\Lambda))$. The idea of the proof consists in showing that points in
$\V(A\cap\C[x,u])$ represent points $x\in X$ along with complex multiples of $(\Delta_J(x)|J\in\Lambda)$ or
limits of such.
\\
\\
Suppose first that $x_0\in X$ is non-singular. We claim that $(x_0,u_0)\in\V(A\cap\C[x,u])$
if and only if $u_0$ is a complex multiple of $(\Delta_J(x_0)|J\in\Lambda)$.
Let $(x_0,u_0)\in\V(A\cap\C[x,u])$. Since $x_0$ is non-singular, by theorem \ref{t. higher order jacobian},
$\Delta_J(x_0)\neq0$ for some $J\in\Lambda$. In particular, $(x_0,u_0)\notin \V(F,\Delta_J|J\in\Lambda)$.
According to \cite{CLO}, Ch. 3, Section 1, Theorem 3, the partial solution $(x_0,u_0)\in\V(A\cap\C[x,u])$
extends to a solution $(x_0,t_0,u_0)\in\V(A)$, for some $t_0\in\C$. This implies that
${u_0}_J-t_0\Delta_J(x_0)=0$ for all $J\in\Lambda$, i.e., $u_0=t_0(\Delta_J(x_0)|J\in\Lambda).$
Suppose now that $u_0$ is a complex multiple of $(\Delta_J(x_0)|J\in\Lambda)$.
In particular, $(x_0,t,u_0)\in\V(A)$, for some $t\in\C$. This immediately implies that
$(x_0,u_0)\in\V(A\cap\C[x,u])$ (see \cite{CLO}, Ch. 3, Section 2, Lemma 1).
\\
\\
Now we suppose that $x_0\in X$ is singular. We claim that $(x_0,u_0)\in\V(A\cap\C[x,u])$ if and
only if $u_0$ is limit of multiples of $(\Delta_J(x)|J\in\Lambda)$ for a sequence of non-singular
points converging to $x_0$.
\\
\\
We start with the second implication. Let $\{x_k\}\subset X\setminus S(X)$ be a sequence
such that $x_k\to x_0$ and let $\{u_k\}$ be the sequence of
multiples of $(\Delta_J(x_k)|J\in\Lambda)$ converging to $u_0$. By the non-singular case
we have that $(x_k,u_k)\in\V(A\cap\C[x,u])$ for all $k$. Then $(x_k,u_k)\rightarrow(x_0,u_0)$
and since $\V(A\cap\C[x,u])$ is a closed set then we must have $(x_0,u_0)\in \V(A\cap\C[x,u])$.
\\
\\
Let us suppose now that $(x_0,u_0)\in\V(A\cap\C[x,u])$. Since $x_0$ is singular, $\Delta_J(x_0)=0$
for all $J\in\Lambda$, according to theorem \ref{t. higher order jacobian}. On the other hand,
we can assume $u_0\neq 0$ (if $u_0=0$ the claim is trivially true).
These facts imply $(x_0,t,u_0)\notin\V(A)$ for all $t\in \C$. Therefore,
$(x_0,u_0)\notin\pi_t(\V(A))$, where $\pi_t$ is the projection to the $x$ and $u$ coordinates.
According to \cite{CLO}, Ch. 3, Section 2, Theorem 3, we know that
$\overline{\pi_t(\V(A))}=\V(A\cap\C[x,u])$.
Since $(x_0,u_0)\in\V(A\cap\C[x,u])$ it follows that $(x_0,u_0)$ is limit of points in $\pi_t(\V(A))$
(notice that we are using the fact that topological and algebraic closure coincides), i.e.,
$(x_k,u_k)\to(x_0,u_0)$ for some sequence $\{(x_k,u_k)\}\subset\pi_t(\V(A))$. Thus, there exists
$\{(x_k,t_k,u_k)\}\subset\V(A)$ such that $\pi_t(x_k,t_k,u_k)=(x_k,u_k)$ In particular, $u_k$ is
a complex multiple of $(\Delta_J(x_k)|J\in\Lambda)$.
If $x_k\in X$ is singular for all $k$ then $u_k=0$, so that $(x_k,0)=\pi_t(x_k,t_k,0)$ is such that
$(x_k,0)\to(x_0,u_0)\neq(x_0,0)$, which is a contradiction. We conclude that there are at most a finite
number of singular points in $\{x_k\}$. Taking $k$ sufficiently large, we have a non-singular sequence.
This finishes the proof of the claim.
\\
\\
To conclude the proof of the theorem we notice the following natural bijective correspondence:
$\V(A\cap\C[x,u])\cap\{x=0\}\leftrightarrow\V((A\cap\C[x,u])/\langle x \rangle).$
Thus, points in $\V((A\cap\C[x,u])/\langle x \rangle)$ correspond to limits of complex multiples
of vectors $(\Delta_J(x_k)|J\in\Lambda)$, where $\{x_k\}\subset X\setminus S(X)$ and $x_k\rightarrow0$.
These vectors determine the spaces $T_{x_k}^nX$. We have obtained the desired identification.
\end{proof}

Next we present a simple example to illustrate the method of the previous theorem.

\begin{exam}
Let $F=x^3-y^2$ and $X=\V(F)\subset\C^2$. After computing the corresponding minors of $\Jcd$
(see (\ref{e. Jac2(x3-y2)})) we define:
\begin{align}
A=\langle F, &u_1-(3xF-9x^4)tF,u_2-(12x^2y)tF,u_3+(4y^2+F)tF,\notag\\
&u_4-27x^6t+(9x^3)tF,u_5+18x^4yt+(6xy)tF,u_6-12x^2y^2t-(3x^2)tF,\notag\\
&u_7+18x^4yt-(6xy)tF,u_8-12x^2y^2t+(3x^2)tF,u_9+8y^3t+(2y)tF,\notag\\
&u_{10}-(12xy^2-9x^4)t\rangle.\notag
\end{align}
$A$ is an ideal in $\C[x,y,t,u_1,\ldots,u_{10}]$. Now we use the theory of Gr\"obner bases
to compute a basis of $A\cap\C[x,y,u_1,\ldots,u_{10}]/\langle x,y \rangle$. First, using
$\mathtt{SINGULAR}$ $\mathtt{3}$-$\mathtt{1}$-$\mathtt{6}$ (\cite{DGPS}),
we compute a Gr\"obner basis of $A$ with respect to lexicographical order assuming $t>x>y>u_i$,
call it $G$. Then $G\cap\C[x,y,u_1,\ldots,u_{10}]$ is a basis of $A\cap\C[x,y,u_1,\ldots,u_{10}]$
(see \cite{CLO}, Ch. 3, Section 1, Theorem 2). By making $x=0$, $y=0$ in the resulting set we
obtain $A\cap\C[x,y,u_1,\ldots,u_{10}]/\langle x,y \rangle=
\langle u_1,u_2,u_3,u_4,u_5,u_6^2,u_8,u_9,u_{10} \rangle.$
It follows that the zero set of this ideal is $L=\{(0,\ldots,0,a_7,0,0,0)\in\C^{10}\}$.
This means that $\Li$ consists of only one limit of spaces $T^2_pX$, for any sequence of
non-singular points converging to the origin, which corresponds to the projectivization of $L$.
\end{exam}

\begin{exam}
Let $F=x^3+x^2-y^2$ and $X=\V(F)\subset\C^2$. Consider the Jacobian matrix of order 2 of $F$:
\[
\Jc_2(F):=
\begin{pmatrix}
3x^2+2x& 	-2y& 	3x+1&  	     0&     -1\\
F&  	      0&    3x^2+2x&   -2y&      0\\
0&            F&     0&      3x^2+2x&   -2y
\end{pmatrix}
\]
As in the previous example we find that a basis of $A\cap\C[x,y,u_1,\ldots,u_{10}]/\langle x,y \rangle$
is given by the set $\{u_1-2u_7,u_2-u_3,u_3-u_6,u_4-u_5,u_5-2u_7,u_6^2-4u_7^2,u_8,u_9,u_{10}\}$.
The zero set of this ideal is the following set:
\begin{align}
L=\{(a_1,a_2,\ldots,a_{10})\in\C^{10}|a_1=a_4=a_5=2a_7,a_2=a_3=a_6,&\notag\\
a_2^2-4a_7^2=0,a_8=a_9=a_{10}=0\}.\notag
\end{align}
In particular, there are only two different limits of spaces $T^2_pX$ corresponding to the
projectivization of $L$.
\end{exam}

The higher Nash blowup is a modification of a variety. In particular, for curves, its fibers are finite
sets. This is not necessarily true for varieties of higher dimension as the following example shows.

\begin{exam}
Let $F=xy-z^4$ and $X=\V(F)\subset\C^3$. It is well known that $\mathcal L_1(X,0)$ is an infinite set:
any plane in $\C^3$ containing the $z$-axis is a limit of tangent spaces (see the example following Proposition
1 in \cite{Sh}). Now we show that $\mathcal L_2(X,0)$ is also infinite.
Consider the Jacobian matrix of order 2 of $F$:
\[
\Jc_2(F):=
\begin{pmatrix}
y&       	  x& 	-4z^3&       0&      1&     0&      0&      0&      -6z^2\\
F&  	      0&        0&       y&      x&     0&    -4z^3&    0&        0\\
0&  	      F&        0&       0&      y&     x&      0&    -4z^3&      0\\
0&  	      0&        F&       0&      0&     0&      y&      x&      -4z^3
\end{pmatrix}
\]
After carefully computing the minors of $\Jc_2(F)$, a basis for the ideal
$A\cap\C[x,y,z,u_1,\ldots,u_{126}]/\langle x,y,z \rangle$ is given by the following set:
\begin{align}
\{&u_1,\ldots,u_{37},u_{39},\ldots,u_{82},u_{84},\ldots,u_{111},u_{114}^2,u_{115}^3,u_{116}^2,
u_{117},\ldots,u_{121},\notag\\
&u_{122}^2,u_{123},u_{124}^2,u_{125},u_{126},u_{38}u_{83},u_{38}u_{113},u_{38}u_{114},u_{38}u_{122},u_{38}u_{124},\notag\\
&u_{83}u_{112},u_{83}u_{114},u_{83}u_{115},u_{83}u_{116},u_{112}u_{124},u_{113}u_{115}^2,u_{113}u_{116},\notag\\
&u_{114}u_{115},u_{114}u_{116},u_{114}u_{122},u_{114}u_{124},u_{115}u_{116},u_{115}u_{122},u_{115}u_{124},\notag\\
&u_{116}u_{122},u_{116}u_{124},u_{122}u_{124},u_{112}u_{114}+u_{113}u_{115},u_{112}u_{122}-u_{113}u_{114},\notag\\
&8u_{112}u_{116}+3u_{115}^2,8u_{113}u_{124}+3u_{122}^2\}\notag.
\end{align}
The zero set of this ideal in $\C^{126}$ is:
\begin{align}
L=\{(a_1,a_2,\ldots,a_{126})\in\C^{126}|&a_1=\cdots=a_{37}=a_{39}=\cdots=a_{82}=a_{84}=0,\notag\\
&a_{85}=\cdots=a_{111}=a_{114}=\cdots=a_{126}=0,\notag\\
&a_{38}a_{83}=a_{38}a_{113}=a_{83}a_{112}=0\}\notag.
\end{align}
$L$ corresponds to three 2-dimensional planes in $\C^{126}$: $P_1=\mbox{span}\{e_{112},e_{113}\}$,
$P_2=\mbox{span}\{e_{38},e_{112}\}$, $P_3=\mbox{span}\{e_{83},e_{113}\}$. After projectivization we obtain
three lines in $\mathbb{P}^{125}$, call them $l_1$, $l_2$, $l_3$. These $l_i$ give place to the following
families of 4-dimensional vector spaces of $\C^{9}$:
$$\{(0,0,0,\lambda_4,\lambda_5,\lambda_6,\lambda_7,\lambda_8,0)\in\C^9|a\lambda_8-b\lambda_7=0,
(a,b)\in\C^2\setminus\{(0,0)\}\},$$
$$\{(\lambda_1,0,0,\lambda_4,\lambda_5,\lambda_6,\lambda_7,0,0)\in\C^9|a\lambda_1-d\lambda_6=0,
(a,d)\in\C^2\setminus\{(0,0)\}\},$$
$$\{(0,\lambda_2,0,\lambda_4,\lambda_5,\lambda_6,0,\lambda_8,0)\in\C^9|b\lambda_2-c\lambda_4=0,
(b,c)\in\C^2\setminus\{(0,0)\}\},$$
respectively. Taking orthogonal complements, we have that any 5-dimensional vector space $W\subset\C^9$ such that
$W\subset\mbox{span}(e_1,e_2,e_3,e_7,e_8,e_9)$ and contains $\mbox{span}(e_1,e_2,e_3,e_9)$, or
$W\subset\mbox{span}(e_1,e_2,e_3,e_6,e_8,e_9)$ and contains $\mbox{span}(e_2,e_3,e_8,e_9)$, or
$W\subset\mbox{span}(e_1,e_2,e_3,e_4,e_7,e_9)$ and contains $\mbox{span}(e_1,e_3,e_7,e_9)$, is
a limit of spaces $T_p^2X$, where $e_i$ denotes the canonical basis of $\C^9$.
\end{exam}


\subsection{An ideal defining the higher Nash blowup of a hypersurface}\label{s. ideal defining higher Nash}

The goal of this section is to prove that the ideal whose blowup is the higher Nash
blowup of order $n$ of a hypersurface, correspond to the ideal generated by the maximal
minors of the Jacobian matrix of order $n$ of the polynomial defining the hypersurface.
This fact will be a direct consequence of a more general result of O. Villamayor appearing
in \cite{V}. With this ideal at hand, we prove that the higher-order version of Nobile's theorem
is true for normal hypersurfaces. We also exhibit some examples of singular plane curves where
this result holds for the higher Nash blowup of order 2.
\\
\\
Let us first expose the results we are going to need. Let $F\in\Po$ be an irreducible polynomial,
$R=\Po/\langle F \rangle$ and $X=\Sp R$. Let $I:=\ker(R\otimes R\rightarrow R)$. We give structure of
$R-$module to $I$ via the map $R\rightarrow R\otimes R$, $r\mapsto r\otimes1$. Let $K$ be the field of
fractions of $R$ and let $r=\dim_K I/I^{n+1}\otimes_R K$ be the generic rank of $I/I^{n+1}$. Consider
the following fractionary ideal of $K$:
$$\mathfrak b:=Im(\bigwedge^{r}\frac{I}{I^{n+1}}\rightarrow\bigwedge^{r}\frac{I}{I^{n+1}}\otimes_R K\cong K).$$

\begin{teo}
The higher Nash blowup of $X$ is isomorphic to the blowup of the fractionary ideal $\mathfrak b$.
\end{teo}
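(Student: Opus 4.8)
The plan is to recognize the higher Nash blowup as the Nash-type modification associated to a single $R$-module, namely $M:=I/I^{n+1}$, and then to apply the general principle that such a modification is the blowup of the fractional ideal built from the top exterior power of $M$.

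First I would pin down the fibers of $M$. The left $R$-module structure on $I/I^{n+1}$ is induced by $r\mapsto r\otimes 1$, and for a closed point $p\in X$ the standard theory of principal parts identifies the fiber of the module of principal parts $(R\otimes R)/I^{n+1}$ at $p$ with the truncated local ring $\Ox_{X,p}/\m_p^{n+1}$; restricting to the augmentation ideal gives a canonical identification of the fiber $M\otimes_R\kappa(p)$ with $\m_p/\m_p^{n+1}$ (for $n=1$ this is just $I/I^2=\Omega_{R/\C}$ with its cotangent fibers). Hence $T^n_pX=(\m_p/\m_p^{n+1})^{\vee}$ is precisely the dual of the fiber of $M$ at $p$, and by Corollary \ref{c. non-sing iff good dim at order n} the generic rank of $M$ is $r=\binom{n+s-1}{s-1}-1=D$. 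The Gauss map $p\mapsto T^n_pX$ is therefore, up to the canonical duality of Grassmannians, the classifying map of $M$ into the Grassmannian bundle $Gr(r,M)$, and its Zariski closure over $X\setminus S(X)$ is $\Na$ by definition.

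Second, I would establish the general principle: for a finitely generated module $M$ over a domain $R$ with field of fractions $K$ and generic rank $r$, the closure of the graph of the rational map from $X$ to $Gr(r,M)$ defined on the locus where $M$ is free of rank $r$ is isomorphic, as a scheme over $X$, to the blowup of $\mathfrak b=\Img(\bigwedge^r M\to\bigwedge^r M\otimes_R K\cong K)$. Concretely, I would choose generators $m_1,\ldots,m_\ell$ of $M$; then the $\binom{\ell}{r}$ products $m_{i_1}\wedge\cdots\wedge m_{i_r}$ generate $\bigwedge^r M$, and their images in $K$ generate $\mathfrak b$. Over the free locus these same images are, up to one common unit, the Pl\"ucker coordinates of the fiber $M_p\subset\C^\ell$, so the map to projective space given by a generating set of $\mathfrak b$ agrees with the Pl\"ucker embedding of the classifying map. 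Since $\mathrm{Bl}_{\mathfrak b}X$ is by construction the closure of the image of precisely this map to projective space, the two closures coincide over $X$.

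Finally I would reconcile the duality: the modification above is phrased through the cotangent fibers $\m_p/\m_p^{n+1}$, whereas $\Na$ is phrased through their duals $T^n_pX$. Sending a subspace to its annihilator is an $X$-isomorphism $Gr(r,M)\cong Gr(r,M^{\vee})$ carrying one classifying map to the other, and the formation of $\mathfrak b$ from $\bigwedge^r M$ is unaffected by this identification except by a principal factor, which a blowup ignores; hence the two modifications agree over $X$. The step I expect to be the main obstacle is the general principle itself, and in particular making it work when $M$ is not locally free along $S(X)$: one must verify that $\mathfrak b$ is the correct fractional ideal rather than a Fitting ideal, that the isomorphism $\bigwedge^r M\otimes_R K\cong K$ is canonical up to a unit so that $\mathfrak b$ is well defined up to the principal ambiguity a blowup disregards, and that the Rees-algebra presentation of $\mathrm{Bl}_{\mathfrak b}X$ really matches the homogeneous coordinate ring of the closure of the image. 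This delicate commutative-algebra verification is exactly the content of Villamayor's result in \cite{V}, which I would invoke to complete the argument.
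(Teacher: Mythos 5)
Your proposal is correct and follows essentially the same route as the paper, which offers no independent argument for this theorem but simply cites \cite{Y}, Proposition 1.8, and \cite{OZ}, Theorem 3.1 --- results whose content is exactly your two-step outline: identify $T^n_pX$ with the dual of the fiber of $I/I^{n+1}$ via principal parts, then invoke the general fact that the closure of the graph of the classifying map of a generically free module is the blowup of the fractionary ideal $\mathfrak b$. Since you delegate the technical core of that general fact to Villamayor \cite{V}, just as the paper delegates it to its references, there is no substantive gap.
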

\begin{proof}
See \cite{Y}, Proposition 1.8, and \cite{OZ}, Theorem 3.1.
\end{proof}

The ideal $\mathfrak b$ can be explicitly described as follows. Consider a presentation of the module $I/I^{n+1}$
by a $(\Lambda\times\Lambda')$-matrix $A$:
\begin{equation}\label{e. presentation}
\xymatrix{R^{\Lambda'}\ar[r]^{A}&R^{\Lambda}\ar[r]&\frac{I}{I^{n+1}}\ar[r]&0}.
\end{equation}
Then there exist $\Lambda-r$ columns of $A$ such that the $\Lambda\times(\Lambda-r)$-matrix $A'$ formed by these
columns has rank $\Lambda-r$.

\begin{pro}\label{p. blown up ideal}
The ideal $\Jn\subset R$ generated by the $(\Lambda-r)$-minors of $A'$ is equal to $\mathfrak b$ for
a suitable choice of isomorphism $\bigwedge^{r}\frac{I}{I^{n+1}}\otimes_R K\cong K$. In addition, the ideal
$\Jn$ is independent of the choice of the $\Lambda-r$ columns of $A$ as long as the rank of the matrix
formed by these columns is $\Lambda-r$.
\end{pro}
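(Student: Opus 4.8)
The plan is to reduce the statement to a computation in exterior algebra over the fraction field $K$, identifying the image $\mathfrak{b}$ directly with the ideal of maximal minors of $A'$. Write $M:=I/I^{n+1}$ and let $m_1,\dots,m_\Lambda$ be the generators of $M$ coming from the free module $R^{\Lambda}$ in the presentation (\ref{e. presentation}), so that the columns of $A$ encode the relations among the $m_i$. First I would tensor (\ref{e. presentation}) with $K$ to get $M\otimes_R K\cong W/V$, where $W=K^{\Lambda}$ and $V=\Img(A\otimes_R K)\subset W$ is the column space, a subspace of dimension $\Lambda-r$ spanned by the columns of $A'$. Since $\bigwedge^{r}M$ is generated as an $R$-module by the wedges $m_{i_1}\wedge\cdots\wedge m_{i_r}$, its image in $\bigwedge^{r}(W/V)\cong K$ is generated by the images of these wedges, and the whole problem becomes the identification of those images with the minors of $A'$.

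The key step is the canonical isomorphism $\bigwedge^{\Lambda}W\cong\bigwedge^{\Lambda-r}V\otimes_K\bigwedge^{r}(W/V)$ induced by the exact sequence $0\to V\to W\to W/V\to0$. Fixing the standard generator $e_1\wedge\cdots\wedge e_\Lambda$ of $\bigwedge^{\Lambda}W$ together with the generator $v_1\wedge\cdots\wedge v_{\Lambda-r}$ of $\bigwedge^{\Lambda-r}V$ given by the columns $v_1,\dots,v_{\Lambda-r}$ of $A'$, this produces a concrete isomorphism $\phi_{A'}\colon\bigwedge^{r}(W/V)\xrightarrow{\ \sim\ }K$. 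Under $\phi_{A'}$ the image $\bar m_{i_1}\wedge\cdots\wedge\bar m_{i_r}$ is sent to $\det[\,v_1\mid\cdots\mid v_{\Lambda-r}\mid e_{i_1}\mid\cdots\mid e_{i_r}\,]$. Expanding this $\Lambda\times\Lambda$ determinant by Laplace along the standard-basis columns $e_{i_j}$ yields, up to sign, the $(\Lambda-r)\times(\Lambda-r)$ minor of $A'$ on the rows complementary to $\{i_1,\dots,i_r\}$. As $\{i_1,\dots,i_r\}$ ranges over all $r$-subsets, the complementary rows range over all $(\Lambda-r)$-subsets, so the images of the generators of $\bigwedge^{r}M$ are exactly the maximal minors of $A'$. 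Taking the isomorphism $\bigwedge^{r}M\otimes_R K\cong K$ to be $\phi_{A'}$ then gives $\mathfrak{b}=\Img(\bigwedge^{r}M\to K)=\Jn$, which is the first assertion.

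For the second assertion I would compare two admissible choices $A'$ and $A''$, with columns $v_1,\dots,v_{\Lambda-r}$ and $w_1,\dots,w_{\Lambda-r}$ respectively. Both families are $K$-bases of the same subspace $V$, so $v_1\wedge\cdots\wedge v_{\Lambda-r}=c\,(w_1\wedge\cdots\wedge w_{\Lambda-r})$ in the one-dimensional space $\bigwedge^{\Lambda-r}V$ for a single scalar $c\in K^{*}$, namely the determinant of the change of basis. Wedging both sides against $e_{i_1}\wedge\cdots\wedge e_{i_r}$ shows that every maximal minor of $A'$ equals $c$ times the corresponding minor of $A''$; hence the two fractionary ideals differ only by the factor $c$. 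This is precisely the ambiguity already inherent in the choice of isomorphism $\bigwedge^{r}M\otimes_R K\cong K$, so $\mathfrak{b}$—and therefore the blowup defining the higher Nash blowup—does not depend on the selection of columns.

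The main point to get right is this independence: for a \emph{general} matrix the ideal of maximal minors of a full-rank column-submatrix does depend on the chosen columns, so the argument must use that here all relevant minors are values of the single element $v_1\wedge\cdots\wedge v_{\Lambda-r}\in\bigwedge^{\Lambda-r}V$ paired against the $e_{i_1}\wedge\cdots\wedge e_{i_r}$. Because two admissible choices yield generators of $\bigwedge^{\Lambda-r}V$ differing by one scalar $c$, all minors rescale by that same $c$, and the fractionary ideal changes only by the unit $c\in K^{*}$. The remaining work—matching the signs produced by the Laplace expansion and verifying that $V$ is genuinely spanned by the columns of $A'$ so that $c$ is a single element rather than varying with the minor—is routine multilinear algebra requiring no deeper input.
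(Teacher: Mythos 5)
Your argument is correct in substance, and it is necessarily a different route from the paper's, because the paper gives no argument at all: its proof of this proposition is a citation to \cite{V} (Proposition 2.5 and Corollary 2.6), with \cite{No} and \cite{GS-1} cited for the case $n=1$. What you have written out is, in effect, the self-contained multilinear algebra underlying that reference in the case at hand: tensor the presentation with $K$ to get $I/I^{n+1}\otimes_R K\cong K^{\Lambda}/V$ with $V=\Img(A\otimes_R K)$ of dimension $\Lambda-r$; use the canonical isomorphism $\bigwedge^{\Lambda}K^{\Lambda}\cong\bigwedge^{\Lambda-r}V\otimes_K\bigwedge^{r}(K^{\Lambda}/V)$, together with the basis of $V$ given by the columns of $A'$, to pin down an isomorphism $\bigwedge^{r}(K^{\Lambda}/V)\cong K$; then observe via Laplace expansion that the generators $m_{i_1}\wedge\cdots\wedge m_{i_r}$ of $\bigwedge^{r}(I/I^{n+1})$ map exactly to signed maximal minors of $A'$. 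This proves the first assertion, and your structural remark --- that all these minors arise by pairing the \emph{single} element $v_1\wedge\cdots\wedge v_{\Lambda-r}$ against the various $e_{i_1}\wedge\cdots\wedge e_{i_r}$, something an arbitrary column choice in an arbitrary matrix does not provide --- is precisely the mechanism that makes the proposition work, and it stays invisible in a citation-only proof. Your route buys a self-contained paper; the paper's route buys brevity and the backing of a more general result (Villamayor's statement covers arbitrary coherent modules in the flattening context).

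One caveat concerns the second assertion. What you actually prove is that two admissible column choices give minor ideals differing by a single factor $c\in K^{*}$, i.e., equality of fractionary ideals up to the same $K^{*}$-ambiguity as the choice of isomorphism $\bigwedge^{r}\frac{I}{I^{n+1}}\otimes_R K\cong K$, hence independence of the resulting blowup. That is weaker than the literal wording ``the ideal $\Jn$ is independent of the choice of the $\Lambda-r$ columns,'' and your proof stops short of it. But note that the literal wording cannot be proved at this level of generality: if the presentation matrix $A$ happens to contain both a relation $w$ and the redundant column $xw$ for some non-unit $x\in R$, then two admissible choices containing $w$, respectively $xw$, and otherwise agreeing, produce ideals $J$ and $xJ$, which are distinct (over a domain, $J=xJ\neq 0$ forces $x$ to be a unit). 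So equality up to a factor in $K^{*}$ is the correct invariant form of the claim, and it is all the paper uses downstream: Proposition \ref{p. higher Nash = blowup minors} and the Nobile-type theorem only require that the blowup of the ideal of maximal minors of $\Jcn$ \emph{is} the higher Nash blowup, and blowups of ideals differing by a factor in $K^{*}$ are canonically isomorphic. If you want your write-up to match the proposition verbatim, you must either impose a hypothesis excluding such redundant presentations or restate the independence in the up-to-$K^{*}$ form, as you did.
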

\begin{proof}
This is a particular case of \cite{V}, Proposition 2.5 and Corollary 2.6. For $n=1$, this is Theorem 1 of
\cite{No} or Theorem 1 of \cite{GS-1} (Section 2).
\end{proof}

With these results at hand, now we can look for the ideal defining the higher Nash blowup.
It is well known that $I=\langle x_i\otimes 1-1\otimes x_i|i=1,\ldots,s\rangle$.
Now consider the following isomorphisms of rings (let $x=(x_1,\ldots,x_s)$, $x'=(x_1',\ldots,x_s')$):
\begin{align}
R\otimes_{\C} R&\cong\C[x,x']/\langle F(x),F(x')\rangle\cong\C[x,x'-x]/\langle F(x),F(x')\rangle\notag\\
          &(\mbox{let }\Delta x:=x'-x)\notag\\
          &\cong\C[x,\Delta x]/
          \langle F(x),F(x+\Delta x)\rangle\notag\\
          &=\C[x,\Delta x]/\langle F(x),\sum_{|\alpha|\geq1}\frac{\partial^{\alpha}F}{\alpha!}
          (\Delta x)^{\alpha}\rangle.\notag
\end{align}
In this isomorphic ring, $I=\langle\Delta x_1,\ldots,\Delta x_s\rangle$.
Thus, the quotient of $R-$modules $I/I^{n+1}$ is generated by
$\{[(\Delta x)^{\alpha}]|1\leq|\alpha|\leq n\}.$ This set has cardinality $N-1$
(recall that $N=\binom{n+s}{s}$).
\\
\\
Let $\{e_{\alpha}|1\leq|\alpha|\leq n\}$ denotes the canonical basis of $R^{N-1}$ (we arrange the set of
such $\alpha$ increasingly by graded lexicographical order assuming $\alpha_1<\ldots<\alpha_s$).
Consider the following surjective map, $\theta:R^{N-1}\rightarrow I/I^{n+1}$,
$e_{\alpha}\mapsto[(\Delta x)^{\alpha}]$. Viewing the rows $r_{\beta}$ of $\Jcn$
as elements of $R^{N-1}$ (so the entries of $\Jcn$ are taken modulo $F$),
we notice that (see ($i$) of lemma \ref{l. remarks c-beta}):
\begin{align}\label{e. relations I/I^n+1}
\theta(r_{\beta})&=\Big[\sum_{1\leq|\alpha|\leq n}\Big(\beta!\binom{\alpha}{\beta}
\frac{\partial^{\alpha-\beta}F}{\alpha!}\Big)(\Delta x)^{\alpha}\Big]\notag\\
&=\Big[\sum_{1\leq|\alpha|\leq n,\mbox{ }\alpha>\beta}
\frac{\partial^{\alpha-\beta}F}{(\alpha-\beta)!}(\Delta x)^{\alpha}\Big]\notag\\
&=\Big[(\Delta x)^{\beta}\Big(\sum_{1\leq|\alpha|\leq n,\mbox{ }\alpha>\beta}
\frac{\partial^{\alpha-\beta}F}{(\alpha-\beta)!}(\Delta x)^{\alpha-\beta}\Big)\Big]\notag\\
&=\Big[(\Delta x)^{\beta}\Big(-\sum_{|\alpha|>n-|\beta|}
\frac{\partial^{\alpha}F}{\alpha!}(\Delta x)^{\alpha}\Big)\Big]=[0],
\end{align}
where the last equality follows from the fact that every element $(\Delta x)^{\alpha+\beta}$
appearing on the sum satisfies $|\alpha|+|\beta|>n-|\beta|+|\beta|=n$. In particular,
every row of $\Jcn$ represents a relation of the generators of $I/I^{n+1}$.

\begin{lem}\label{l. generic rank}
The generic rank of $I/I^{n+1}$ is $\binom{n+s-1}{s-1}-1$.
\end{lem}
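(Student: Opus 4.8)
The plan is to compute the generic rank by base-changing $I/I^{n+1}$ to the field of fractions $K$ and recognizing the result as the local cotangent data at the generic point of $X$. Since $K$ is a localization of $R$, it is flat over $R$, so tensoring with $K$ is exact and commutes with forming powers of ideals. Writing $S:=(R\otimes R)\otimes_R K$ and $I_K:=I\cdot S$, tensoring the short exact sequence $0\to I^{n+1}\to I\to I/I^{n+1}\to0$ with $K$ therefore gives
$$\Big(\frac{I}{I^{n+1}}\Big)\otimes_R K\cong\frac{I_K}{I_K^{\,n+1}},$$
so the generic rank $r$ is exactly $\dim_K I_K/I_K^{\,n+1}$.

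Next I would make $S$ explicit using the isomorphism $R\otimes R\cong\C[x,\Delta x]/\langle F(x),F(x+\Delta x)\rangle$ recorded above. Because the $R$-module structure on $I/I^{n+1}$ is the one induced by $r\mapsto r\otimes1$, i.e.\ it acts through the variables $x$, base-changing along $R\to K$ amounts to inverting the nonzero elements of $R$ and imposing $F(x)=0$ over $K$; this yields $S\cong K[\Delta x_1,\ldots,\Delta x_s]/\langle F(x+\Delta x)\rangle$ with $I_K=\langle\Delta x_1,\ldots,\Delta x_s\rangle$. Since $F(x)=0$ in $K$, the defining relation expands as $F(x+\Delta x)=\sum_{|\alpha|\geq1}\frac{\partial^{\alpha}F(x)}{\alpha!}(\Delta x)^{\alpha}$.

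Then I would use that the generic point is a smooth point. As $F$ is reduced and non-constant, some partial derivative $\partial_iF$ is a nonzero element of $R$ (its degree is strictly less than that of $F$, so $F$ cannot divide it), hence is nonzero in $K$. Thus the linear part of $F(x+\Delta x)$ in the variables $\Delta x$ does not vanish, so $I_K$ is the maximal ideal of $S$ at the origin $\Delta x=0$, which is a smooth point of the hypersurface $\V(F(x+\Delta x))$ over $K$, of dimension $s-1$. Consequently $S_{I_K}$ is a regular local ring of dimension $s-1$ with residue field $K$.

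Finally, applying lemma \ref{l. natural isom} to $S$ and the maximal ideal $I_K$ identifies $I_K/I_K^{\,n+1}$ with $\m S_{I_K}/\m^{n+1}S_{I_K}$, and for a regular local ring of dimension $s-1$ the associated graded ring is a polynomial ring over $K$ in $s-1$ variables, giving $\dim_K\m^{j}/\m^{j+1}=\binom{j+s-2}{s-2}$. Summing over $1\leq j\leq n$ produces $\dim_K\m/\m^{n+1}=\binom{n+s-1}{s-1}-1$, the claimed generic rank; equivalently one may invoke corollary \ref{c. non-sing iff good dim at order n}, whose symmetric-power argument is valid over the base field $K$. I expect the only delicate step to be the second one: correctly tracking which of the two $R$-module structures is being base-changed and verifying that $(I/I^{n+1})\otimes_R K$ genuinely becomes $\m/\m^{n+1}$ at a smooth point. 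Once smoothness of the generic point is secured, the rank is a routine Hilbert-function computation for a regular local ring.
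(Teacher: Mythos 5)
Your proof is correct, but it takes a genuinely different route from the paper. The paper disposes of this lemma in one line, by citing the known rank computation for the sheaf of principal parts (\cite{G}, 16.3.7, and \cite{LT}, Section 4): $I/I^{n+1}$ is the kernel part of the module of principal parts ${\mathcal P}^n$ of $R$ over $\C$, whose rank at the generic point (a smooth point, in characteristic $0$) is classical. You instead give a self-contained computation: flat base change along $R\to K$ turns $I/I^{n+1}$ into $I_K/I_K^{n+1}$ inside $S\cong K[\Delta x]/\langle F(x+\Delta x)\rangle$; the origin $\Delta x=0$ is a smooth $K$-rational point because some $\partial_i F$ survives in $K$ (here is where irreducibility/reducedness of $F$ and characteristic $0$ enter); then lemma \ref{l. natural isom} plus the Hilbert function of a regular local ring of dimension $s-1$ gives $\sum_{j=1}^{n}\binom{j+s-2}{s-2}=\binom{n+s-1}{s-1}-1$. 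All the steps check out, including the two you flag as delicate: the module structure is indeed through $r\mapsto r\otimes 1$, so localizing inverts the $x$-variables and leaves $\Delta x$ alone, and extension of ideals along the flat map $R\otimes R\to S$ does commute with powers. What the paper's citation buys is brevity and generality (the principal-parts statement holds for arbitrary generically smooth varieties, not just hypersurfaces, which matters since the lemma is what connects the hypersurface case to Villamayor's general machinery); what your argument buys is that the lemma becomes elementary and internal to the paper, using only tools already present (lemma \ref{l. natural isom}, the Taylor expansion of $F(x+\Delta x)$ that the paper itself writes down just before this lemma, and the regular-local-ring count that also underlies corollary \ref{c. non-sing iff good dim at order n}), and it makes visible exactly where the hypotheses on $F$ are used.
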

\begin{proof}
This is just the local version of known results on the sheaf of principal parts
(see \cite{G}, Paragraph 16.3.7 and \cite{LT}, Section 4).
\end{proof}

\begin{pro}\label{p. higher Nash = blowup minors}
The ideal $\Jn$ defining the higher Nash blowup of $X$ coincides with the ideal generated by
the maximal minors of $\Jcn$.
\end{pro}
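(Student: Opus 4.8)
The plan is to exhibit $\Jcn^t$ as a presentation matrix of the $R$-module $I/I^{n+1}$ and then read off $\Jn$ directly from Proposition \ref{p. blown up ideal}. We already have the surjection $\theta\colon R^{N-1}\to I/I^{n+1}$, $e_\alpha\mapsto[(\Delta x)^\alpha]$, and the computation (\ref{e. relations I/I^n+1}) shows that, over $R$, each row $r_\beta$ of $\Jcn$ lies in $\ker\theta$. So we already have a complex $R^M\xrightarrow{\Jcn^t}R^{N-1}\xrightarrow{\theta}I/I^{n+1}\to 0$ that is exact at the last two spots; what remains is exactness at $R^{N-1}$, i.e. that the $r_\beta$ generate \emph{all} relations among the generators $[(\Delta x)^\alpha]$.

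For that key step I would argue ring-theoretically. Write $P:=R\otimes_\C R/I^{n+1}$. Under the isomorphisms used in this section, $P=R[\Delta x]/\langle F(x+\Delta x),\ \langle\Delta x\rangle^{n+1}\rangle$. As an $R$-module, $\bar G:=R[\Delta x]/\langle\Delta x\rangle^{n+1}$ is free with basis $\{(\Delta x)^\alpha : 0\le|\alpha|\le n\}$, and $P=\bar G/(\Phi\bar G)$, where $\Phi$ denotes $F(x+\Delta x)\equiv\sum_{1\le|\alpha|\le n}\frac{\partial^\alpha F}{\alpha!}(\Delta x)^\alpha$ modulo $\langle\Delta x\rangle^{n+1}$ (using $F(x)=0$ in $R$). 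The ideal $\Phi\bar G$ is generated as an $R$-module by the elements $(\Delta x)^\beta\Phi$ with $0\le|\beta|\le n$, those with $|\beta|=n$ vanishing; and by the computation (\ref{e. relations I/I^n+1}) the coefficient vector of $(\Delta x)^\beta\Phi$ in this basis is exactly $r_\beta$. Since $\Phi$ has no constant term, none of these generators involves the basis vector $e_0=(\Delta x)^0$, so the quotient splits as $\bar G/(\Phi\bar G)=R\,e_0\oplus\big(R^{N-1}/\Img\Jcn^t\big)$. The second summand is precisely the image of $I$ in $P$, that is $I/I^{n+1}$. This yields the desired exactness and shows $\Jcn^t$ is a presentation matrix of $I/I^{n+1}$, with $\Lambda=N-1$ and $\Lambda'=M$.

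It then remains to invoke Proposition \ref{p. blown up ideal}. By Lemma \ref{l. generic rank} the generic rank is $r=\binom{n+s-1}{s-1}-1$, and Pascal's identity gives
$$\Lambda-r=(N-1)-\Big(\binom{n+s-1}{s-1}-1\Big)=\binom{n+s}{s}-\binom{n+s-1}{s-1}=\binom{n+s-1}{s}=M.$$
Thus the number $\Lambda-r$ of columns to be selected equals the total number $M$ of columns of $A=\Jcn^t$, so necessarily $A'=A$. The required rank condition $\rk A'=\Lambda-r=M$ holds because, by Theorem \ref{t. higher order jacobian}, $\rk\Jcn_{|p}=M$ at every non-singular point, and such points are dense in the irreducible variety $X$; hence the generic rank of $\Jcn$, and of $\Jcn^t$, equals $M$. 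Proposition \ref{p. blown up ideal} then identifies $\Jn$ with the ideal generated by the $M$-minors of $A'=\Jcn^t$, which are exactly the maximal minors of $\Jcn$, as claimed.

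I expect the exactness of the presentation — establishing that the rows $r_\beta$ generate \emph{all} relations, not merely some — to be the only genuinely substantive point, and the splitting-off of the $e_0$-summand is the device that makes it transparent. The remaining arithmetic (the combinatorial identity together with the generic-rank and density argument) is routine given the earlier results.
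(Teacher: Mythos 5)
Your proof is correct, and it differs from the paper's at exactly the step you single out as the substantive one. You establish that $\Jcn^t$ is a \emph{complete} presentation matrix of $I/I^{n+1}$ (exactness at $R^{N-1}$), via the splitting $\bar G/\Phi\bar G\cong R\,e_0\oplus\bigl(R^{N-1}/\Img(\Jcn^t)\bigr)$, and then apply Proposition \ref{p. blown up ideal} with $A'=A=\Jcn^t$; this splitting argument is sound (the ideal generated by $\Phi$ in $\bar G$ is spanned over $R$ by the truncations of $(\Delta x)^\beta\Phi$, $0\leq|\beta|\leq n-1$, whose coordinate vectors are the $r_\beta$, and it misses the $e_0$-component). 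The paper, by contrast, never proves exactness at $R^{N-1}$: it uses (\ref{e. relations I/I^n+1}) only to conclude that the $r_\beta$ are \emph{some} relations, so that $\Jcn^t$ sits as a submatrix inside an unspecified presentation matrix $A$, and then it leans on the second clause of Proposition \ref{p. blown up ideal} --- the ideal of $(\Lambda-r)$-minors is independent of which $\Lambda-r$ columns of full rank one selects. So the step you expected to be unavoidable is precisely the step the paper arranges to skip. What your route buys: a stronger structural fact of independent interest (an explicit finite presentation of $I/I^{n+1}$ by the higher-order Jacobian), and an application of Villamayor's result that needs no independence-of-columns clause since there is only one choice of $M$ columns. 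What the paper's route buys: brevity --- once the rows are known to be relations and $\rk\,\Jcn=M$ generically, Villamayor's proposition does the rest. The remaining ingredients coincide in both arguments: the count $\Lambda-r=M$ from Lemma \ref{l. generic rank} plus Pascal's identity, and the generic rank of $\Jcn$ (the paper gets it from the row-echelon form after a coordinate change, you from Theorem \ref{t. higher order jacobian} at a non-singular point together with irreducibility of $X$; for the latter, note that a single non-singular point already suffices).
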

\begin{proof}
Consider the following presentation of $I/I^{n+1}$:
$$\xymatrix{R^{\Lambda'}\ar[r]^{A}&R^{N-1}\ar[r]^{\theta}&\frac{I}{I^{n+1}}\ar[r]&0}.$$
According to (\ref{e. relations I/I^n+1}), we have that $\Jcn^t$ is a submatrix of $A$.
After change of coordinates if necessary, we can assume that $\partial^{(1,0,\ldots,0)}F\neq0$.
As in proposition \ref{p. higher-order jacobian}, it follows that $\Jcn$ is in row echelon form.
Thus $\rk\mbox{ }\Jcn=M$ (recall that $M=\binom{n+s-1}{s}$). On the other hand, since
$M=N-1-(\binom{n+s-1}{s-1}-1)$, the previous lemma implies that $\Jcn^t$ satisfies the requirements
on the submatrix $A'$ of $A$ in (\ref{e. presentation}). By proposition \ref{p. blown up ideal},
we conclude that the ideal whose blowup gives the higher Nash blowup of $X$, coincides with the ideal
generated by the maximal minors of $\Jcn$.
\end{proof}

To give an example of how can we use the explicit description of the ideal defining the
higher-order Nash blowup of a hypersurface, we are going to study a higher-order
version of the following theorem due to A. Nobile.

\begin{teo}
Let $X$ be an equidimensional algebraic variety over $\C$. Let $(X^*,\nu)$ be the Nash
modification of $X$. Then $\nu$ is an isomorphism if and only if $X$ is non-singular.
\end{teo}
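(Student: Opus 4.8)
The plan is to prove both implications, placing all of the genuine work in the ``only if'' direction, which is where the hypothesis of characteristic zero is indispensable (the statement is false over fields of positive characteristic). Throughout I realize $X\subset\C^s$ as a reduced equidimensional variety of dimension $d$, and I use that by construction $X^*$ is the closure in $X\times Gr(d,\C^s)$ of the graph of the Gauss map $p\mapsto T_pX$ over $X\setminus S(X)$, with $\nu$ the first projection. On $Gr(d,\C^s)$ I fix the tautological rank-$d$ subbundle of $\mathcal{O}^s$ and denote by $\mathcal{S}\subseteq\mathcal{O}_{X^*}^s$ its pullback; this is a rank-$d$ subbundle that coincides with the tangent bundle over the non-singular locus.

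For the ``if'' direction I would argue that when $X$ is non-singular the tangent sheaf is locally free of rank $d$, so $p\mapsto T_pX$ is a morphism $X\to Gr(d,\C^s)$ defined on all of $X$. Its graph is then already closed and is carried isomorphically onto $X$ by the first projection; since this graph equals $X^*$, the map $\nu$ is an isomorphism.

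The crux is the ``only if'' direction, which I would carry out in three steps. First, assuming $\nu$ is an isomorphism I descend $\mathcal{S}$ through $\nu$ to a rank-$d$ subbundle of $\mathcal{O}_X^s$ on $X$ itself, and I claim that every local section $v=\sum a_i\partial_{x_i}$ of $\mathcal{S}$ is a vector field tangent to $X$; this uses only that $X$ is reduced with dense smooth locus, since for $f$ in the ideal of $X$ the function $v(f)\in\mathcal{O}_X$ vanishes on $X\setminus S(X)$ and hence vanishes identically, i.e. $v$ preserves the ideal. Second, I fix $x\in X$ and a local frame $v_1,\dots,v_d$ of $\mathcal{S}$ near $x$; the vectors $v_i(x)$ span the $d$-dimensional fiber $\mathcal{S}_x\subseteq\C^s$ and are thus linearly independent. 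Here I invoke characteristic zero to form the local analytic flows $\Phi^{v_i}_t$: because each $v_i$ is tangent to $X$, these are germs of biholomorphisms of $(\C^s,x)$ preserving $X$, and being connected to the identity they preserve every analytic branch of $X$ at $x$. The orbit map $\psi(t_1,\dots,t_d)=\Phi^{v_1}_{t_1}\circ\cdots\circ\Phi^{v_d}_{t_d}(x)$ has injective differential at the origin, so its image $Y$ is a smooth $d$-dimensional germ contained in $X$. Third, since each flow preserves every branch $C_j$ of $(X,x)$ and $x$ lies on all of them, the orbit $Y$ lies in $\bigcap_j C_j$; as $\dim Y=d=\dim C_j$, there can be only one branch and it equals $Y$. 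Hence $X$ is analytically irreducible and smooth at $x$, and as $x$ is arbitrary $X$ is non-singular.

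I expect the main obstacle to be precisely this last step. The easy-to-produce datum—a surjection from $\Omega_X$ onto a locally free rank-$d$ sheaf that is a generic isomorphism—exists at \emph{every} point and by itself does not detect singularity, so the argument must upgrade it to honest tangent vector fields and then \emph{integrate} them; and it must exclude the possibility that $X$ is a transverse or tangential union of smooth branches, which is exactly what the branch-preservation property of the flows accomplishes. I would also flag that reducedness of $X$ is used twice (to identify sections of $\mathcal{S}$ with tangent fields, and to pass between the algebraic and analytic branch structure), and that the passage to analytic flows is the one place where the characteristic-zero hypothesis cannot be dispensed with.
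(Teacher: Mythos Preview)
The paper does not actually prove this theorem: its entire ``proof'' is the citation ``See \cite{No}, Theorem~2.'' So there is no in-paper argument to compare against; what you have written is a self-contained proof where the paper offers none.

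Your argument is correct and is, in fact, the analytic incarnation of Nobile's original proof. The essential mechanism---that an isomorphic Nash modification yields a rank-$d$ subbundle $\mathcal{S}\subset\mathcal{O}_X^s$ whose sections are genuine tangent derivations, and that over $\C$ one can \emph{integrate} $d$ pointwise independent such derivations to force smoothness---is exactly what Nobile does. Two remarks on points you pass over quickly: (i) to form the flows you are implicitly lifting each $v_i$ to a holomorphic vector field on an ambient neighbourhood of $x$ in $\C^s$; the lift is non-canonical but its restriction to $X$, and hence the induced flow on $X$, is independent of the choice because the difference of two lifts lies in $I_X\cdot\mathrm{Der}(\mathcal{O}_{\C^s})$; (ii) the step ``there can be only one branch and it equals $Y$'' deserves one more sentence: $Y\subset C$ with both of pure dimension $d$ and $C$ irreducible forces $I_C\subset I_Y$ to be an inclusion of primes of the same height in the regular local ring $\mathcal{O}^{an}_{\C^s,x}$, hence $I_C=I_Y$. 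With those two clarifications your proof is complete.

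If you want a purely algebraic variant (closer to how Nobile phrases it, and avoiding analytic flows), the same data---$d$ derivations $v_1,\dots,v_d$ of $\mathcal{O}_{X,x}$ whose values at $x$ are linearly independent---lets you choose $t_1,\dots,t_d\in\m_x$ with $v_i(t_j)(x)=\delta_{ij}$ and then invoke the classical fact (Seidenberg, or Matsumura's treatment of derivations) that in characteristic zero a complete local ring admitting a derivation $D$ with $D(t)$ a unit splits as a power series ring in $t$ over $\ker D$; iterating $d$ times shows $\widehat{\mathcal{O}}_{X,x}\cong\C[[t_1,\dots,t_d]]$. This is the same proof with the integration step replaced by its formal-power-series shadow, and it makes the role of characteristic zero equally transparent (in characteristic $p$ the derivation $\partial/\partial t$ on $k[t]/(t^p)$ satisfies $D(t)=1$ without the ring being regular).
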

\begin{proof}
See \cite{No}, Theorem 2.
\end{proof}

We can naturally ask if this theorem holds when we replace Nash modification by the
higher Nash blowup. The next example considers the case of the second Nash
blowup of some singular plane curves.

\begin{exam}
Let $F=y^p-x^q\in\C[x,y]$, where $2\leq p < q$ and $(p,q)=1$. Let $X=\V(F)\subset\C^2$. After computing
the maximal minors of $\Jc_2(F)$, we obtain that the ideal $\mathcal{J}_2$ of proposition \ref{p. higher Nash = blowup minors}
is generated by (recall that we take the minors modulo $F$):
\[\mathcal{J}_2=
\left\{
\begin{array}{rll}
&\langle x^{q-2}y^{2p-2}, y^{3p-3} \rangle, & \mbox{ if } p=2,3,\\
&\langle x^{q-3}y^{2p},x^{q-2}y^{2p-2},y^{3p-3} \rangle, & \mbox{ if } p>3.
\end{array}
\right.
\]
Notice that $\mathcal{J}_2$ is a non-principal ideal in every case (this can be seen by using
the isomorphism $\C[x,y]/\langle F \rangle\cong\C[u^p,u^q]$ and the hypothesis on $p$, $q$).
In particular, the higher Nash blowup of order 2 of $X$ is not an isomorphism
(see \cite{L}, Chapter 8, Proposition 1.12).
Since $X$ is a singular curve, this shows that the analogue of Nobile's theorem on the usual
Nash blowup is also true for the higher Nash blowup of order 2 for this family of curves.
\end{exam}

Even though the strategy in the previous example is unlikely to work for the general case of a
hypersurface, we can still use proposition \ref{p. higher Nash = blowup minors} to show that
the higher-order analogue of Nobile's theorem holds for normal hypersurfaces.

\begin{teo}
Let $F\in\Po$ be an irreducible polynomial and $X=\V(F)\subset\C^s$. Suppose $X$ is normal.
Let $(\Na,\pi_n)$ be the higher Nash blowup of order $n$ of $X$. Then $\pi_n$ is an
isomorphism if and only if $X$ is non-singular.
\end{teo}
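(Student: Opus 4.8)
The plan is to reduce everything to the single ideal $\Jn$ whose blowup realises $\pi_n$ and to exploit normality through codimension. By Proposition \ref{p. higher Nash = blowup minors}, $\pi_n$ is the blowup of $X$ along $\Jn$, the ideal generated by the maximal (i.e.\ $M\times M$) minors of $\Jcn$; by Corollary \ref{c. singular set = Jac_n} the zero set of this same ideal is precisely the singular locus $S(X)$. The one external input I will use is the standard criterion that a blowup along an ideal is an isomorphism if and only if that ideal is invertible, i.e.\ locally principal generated by a nonzerodivisor; this is exactly the principle already invoked for the cuspidal curves via \cite{L}, Chapter 8, Proposition 1.12.

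For the implication ``$X$ nonsingular $\Rightarrow\pi_n$ isomorphism'' I would note that nonsingularity gives $S(X)=\emptyset$, hence $\V(\Jn)=\emptyset$ and so $\Jn=R$ by the Nullstellensatz. The blowup along the unit ideal is an isomorphism, so $\pi_n$ is one.

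The content lies in the converse, which I would establish by contraposition: assuming $X$ singular, I will show $\Jn$ is not invertible, so that $\pi_n$ cannot be an isomorphism. This is where normality enters. Because $X$ is a normal (hence $R_1$) variety, its singular locus $S(X)$ has codimension at least two in $X$, and $S(X)\neq\emptyset$ since $X$ is singular. On the other hand $\Jn$ is a nonzero proper ideal: it is nonzero because some $M\times M$ minor of $\Jcn$ is nonvanishing at a smooth point, where $\rk\Jcn=M$ by Theorem \ref{t. higher order jacobian}, and it is proper because $\V(\Jn)=S(X)\neq\emptyset$. Were $\Jn$ invertible it would be locally generated by a single nonzerodivisor, so by Krull's principal ideal theorem $\V(\Jn)$ would be pure of codimension one. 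This contradicts $\mathrm{codim}\,S(X)\geq 2$; hence $\Jn$ is not invertible and $\pi_n$ is not an isomorphism.

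I expect the delicate point to be the interface between the two inputs rather than either one in isolation: the whole argument turns on the collision between ``$\V(\Jn)$ is pure of codimension one whenever $\Jn$ is invertible'' and ``normality forces $S(X)=\V(\Jn)$ to have codimension at least two''. Making the invertibility criterion precise (in particular checking that an isomorphism forces invertibility, not merely the converse) and confirming that for a hypersurface normality does yield $\mathrm{codim}\,S(X)\geq 2$ via Serre's conditions (the $S_2$ half being automatic, a hypersurface being Cohen--Macaulay) are the two places where I would be most careful; the rest is bookkeeping with the results of this subsection.
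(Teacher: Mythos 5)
Your proposal is correct and follows essentially the same route as the paper: both reduce to the ideal $\Jn$ via Proposition \ref{p. higher Nash = blowup minors} and Corollary \ref{c. singular set = Jac_n}, use normality to force $\mathrm{codim}\,S(X)=\mathrm{codim}\,\V(\Jn)\geq 2$, and conclude that $\Jn$ cannot be (locally) principal, so its blowup is not an isomorphism. Your write-up is in fact slightly more careful than the paper's at the two points you flag (the ``isomorphism $\Rightarrow$ invertible'' direction of the blowup criterion, and the Krull argument showing an invertible ideal has pure codimension-one zero locus), but these are refinements of the same argument, not a different one.
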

\begin{proof}
$\pi_n$ only modifies singular points so if $X$ is non-singular then $\pi_n$ is an isomorphism.
Suppose now that $X$ is singular. Let $\Jn$ be the ideal defining the higher Nash blowup of
order $n$ of $X$. According to proposition \ref{p. higher Nash = blowup minors} and corollary
\ref{c. singular set = Jac_n}, the zero set of $\Jn$ coincides with the singular locus
$S(X)$ of $X$. Since $X$ is normal, $\dim S(X)\leq d-2$, where $d=\dim X$. It follows that $\Jn$
must be generated by at least two elements. Now we use the fact that the blowup of a non-principal
ideal is not an isomorphism.
\end{proof}

\section*{Acknowledgements}

I want to thank Takehiko Yasuda for his great help during the preparation of this note.
He read previous versions of this paper and made valuable comments. In particular, it was
he who noticed that the higher-order version of Nobile's theorem for normal hypersurfaces
was a consequence of theorem \ref{t. higher order jacobian} and proposition
\ref{p. higher Nash = blowup minors}. I also thank Mark Spivakovsky from whom I
learned some of the computations appearing in section \ref{s. ideal defining higher Nash}.
Those computations were the original motivation of the present work. Finally, I thank
Jawad Snoussi for having introduced me to the reference \cite{Sh}.

\vspace{0.5cm}
{\footnotesize\textsc{Unidad Acad\'emica de Matem\'aticas de la Universidad Aut\'onoma de Zacatecas, a trav\'es
del programa C\'atedras para J\'ovenes Investigadores (Conacyt). Calzada Solidaridad y Paseo La Bufa,
Col. Hidr\'aulica, C.P. 98060, Zacatecas, Zacatecas.}}
\\
{\footnotesize\textsc{Email}: adduarte@matematicas.reduaz.mx}
\end{document}